\documentclass[a4paper,11pt]{amsart}
\usepackage[latin1]{inputenc}
\usepackage{amsmath}
\usepackage{amsfonts}
\usepackage{amssymb}
\usepackage{amsthm}
\usepackage{mathrsfs}
\usepackage{graphicx}
\usepackage{comment}

\addtolength{\evensidemargin}{-7mm}
\addtolength{\oddsidemargin}{-7mm}
\addtolength{\textwidth}{14mm}

\newcommand{\R}{{\mathbb{R}}}
\newcommand{\Q}{{\mathbb{Q}}}
\newcommand{\Z}{{\mathbb{Z}}}
\newcommand{\C}{\mathbb{C}}

\renewcommand\Re{\operatorname{Re}}
\renewcommand\Im{\operatorname{Im}}

\def\vecv{{\text{\boldmath$v$}}}
\def\vecm{{\text{\boldmath$m$}}}

\def\vec0{{\text{\boldmath$0$}}}

\newcommand{\ve}{\varepsilon}

\newcommand{\sfrac}[2]{{\textstyle \frac {#1}{#2}}}

\newcommand{\SL}{\mathrm{SL}}

\newcommand{\GL}{\mathrm{GL}}

\newcommand{\gammamax}{\Gamma_{\text{max}}}

\newtheorem{thm}{Theorem}[section]
\newtheorem{lem}[thm]{Lemma}
\newtheorem{prop}[thm]{Proposition}
\newtheorem{cor}[thm]{Corollary}

\theoremstyle{remark}
\newtheorem{remark}[thm]{Remark}

\numberwithin{equation}{section}

\begin{document}
\title{On the universality of the Epstein zeta function}
\author{Johan Andersson and Anders S\"odergren}
\address{Department of Mathematics, School of Science and Technology, {\"O}rebro\newline
\rule[0ex]{0ex}{0ex}\hspace{12pt}University, {\"O}rebro, SE-701 82 Sweden \newline
\rule[0ex]{0ex}{0ex}\hspace{8pt} {\tt johan.andersson@oru.se}\newline
\newline
\rule[0ex]{0ex}{0ex}\hspace{12pt}Department of Mathematical Sciences, University of Copenhagen,\newline
\rule[0ex]{0ex}{0ex}\hspace{12pt}Universitetsparken 5, 2100 Copenhagen \O, Denmark \newline
\rule[0ex]{0ex}{0ex}\hspace{12pt}\textit{Present address}: Department of Mathematical Sciences, Chalmers University of \newline 
\rule[0ex]{0ex}{0ex}\hspace{12pt}Technology and the University of Gothenburg, SE-412 96 Gothenburg, Sweden \newline
\rule[0ex]{0ex}{0ex}\hspace{12pt}{\tt andesod@chalmers.se}} 

\subjclass[2010]{Primary 11E45, 30K10, 41A30; Secondary 11H06, 60G55.} 

\date{\today}
\thanks{The second author was partially supported by a postdoctoral fellowship from the Swedish Research Council, by the National Science Foundation under agreement No.\ DMS-1128155, as well as by a grant from the Danish Council for Independent Research and FP7 Marie Curie Actions-COFUND (grant id: DFF-1325-00058).}

\maketitle

\begin{abstract}
We study universality properties of the Epstein zeta function $E_n(L,s)$ for lattices $L$ of large dimension $n$ and suitable regions of complex numbers $s$. Our main result is that, as $n\to\infty$, $E_n(L,s)$ is universal in the right half of the critical strip as $L$ varies over all $n$-dimensional lattices $L$. The proof uses a novel combination of an approximation result for Dirichlet polynomials, a recent result on the distribution of lengths of lattice vectors in a random lattice of large dimension and a strong uniform estimate for the error term in the generalized circle problem. Using the same approach we also prove that, as $n\to\infty$, $E_n(L_1,s)-E_n(L_2,s)$ is universal in the full half-plane to the right of the critical line as $(L_1,L_2)$ varies over all pairs of $n$-dimensional lattices. Finally, we prove a more classical universality result for $E_n(L,s)$ in the $s$-variable valid for almost all lattices $L$ of dimension $n$. As part of the proof we obtain a strong bound of $E_n(L,s)$ on the critical line that is subconvex for $n\geq 5$ and almost all $n$-dimensional lattices~$L$.
\end{abstract}

\section{Introduction}

In 1975 Voronin \cite{voronin1, voronin2} proved the following remarkable approximation theorem
for the Riemann zeta function:

\begin{thm}[Voronin]\label{Voroninstheorem}  
Let $K=\left\{ s \in \C : \left|s-\frac34\right| \leq r\right\}$ for some $r<\frac14$, and suppose that $f$
is any nonvanishing continuous function on $K$ that is analytic  in the
interior of $K$. Then, for any $\ve>0$, 
$$\liminf_{T \to \infty} \frac 1 T \mathop{\rm meas} \Big \{t \in
[0,T]:\max_{s \in K} \big|\zeta(s+it)-f(s)\big|<\varepsilon \Big \}>0. $$
\end{thm}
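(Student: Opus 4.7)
The plan is to follow Voronin's original strategy, which splits into three main ingredients: approximation of $\zeta(s+it)$ by a finite Euler product, a Hilbert-space denseness/rearrangement argument producing the right target on the prime-indexed torus, and a Kronecker--Weyl equidistribution statement that converts this target into a positive-density set of $t$'s.

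First, for $s$ in the closed disk $K$ (on which $\Re s>\tfrac12$) and for a suitable sequence $N=N(T)\to\infty$, I would establish a mean-value estimate
$$\frac{1}{T}\int_0^T \max_{s\in K} \bigl|\log\zeta(s+it) - \log\zeta_N(s+it)\bigr|^2\,dt \longrightarrow 0,$$
where $\zeta_N(s)=\prod_{p\leq N}(1-p^{-s})^{-1}$. This rests on the Dirichlet series for $\log\zeta$, standard fourth-moment bounds for $\zeta(\sigma+it)$ with $\sigma>\tfrac12$, and a Cauchy-integral argument passing from $L^2$ bounds on vertical lines to uniform bounds on the compact set $K$. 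After this reduction it suffices to approximate $f$ on $K$ by $\zeta_N(s+it)$ for a positive-density set of $t$.

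Second, since $f$ is nonvanishing and continuous on the simply connected set $K$ and analytic in its interior, I would write $f=\exp g$ with $g$ continuous on $K$ and analytic on the interior. The task then becomes: find $N$ and $\theta=(\theta_p)_{p\leq N}\in\R^{\pi(N)}$ with
$$\max_{s\in K} \Bigl|\,g(s) + \sum_{p\leq N}\log\bigl(1-e^{-2\pi i\theta_p}p^{-s}\bigr)\Bigr|<\ve.$$
The classical route is a Pechersky-type rearrangement theorem: in the Hilbert space of square-integrable analytic functions on a neighborhood of $K$, the series $\sum_p u_p(s)$ with $u_p(s)=-\log(1-p^{-s})$ is conditionally convergent, and its "rotated" rearrangements $\sum_p c_p u_p(s)$ with $|c_p|=1$ are dense in the space. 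The denseness step, which exploits the precise size of $\|u_p\|$ together with a Hahn--Banach argument, is the technical heart of Voronin's theorem and I expect it to be the main obstacle.

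Third, having fixed such a $\theta$, I would invoke the $\Q$-linear independence of $\{\log p : p\text{ prime}\}$ together with Weyl's equidistribution theorem on the torus $(\R/\Z)^{\pi(N)}$ to conclude that, for any $\delta>0$,
$$\liminf_{T\to\infty}\frac{1}{T}\mathop{\rm meas}\Bigl\{t\in[0,T]:\max_{p\leq N}\bigl|e^{-it\log p}-e^{-2\pi i\theta_p}\bigr|<\delta\Bigr\}>0.$$
On this set $\zeta_N(s+it)$ is uniformly close on $K$ to $\prod_{p\leq N}(1-e^{-2\pi i\theta_p}p^{-s})^{-1}$, so combined with the mean-value estimate from step one we obtain that $\max_{s\in K}|\zeta(s+it)-f(s)|<\ve$ holds on a set of $t$ of positive lower density, which is the claim.
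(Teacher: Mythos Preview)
The paper does not prove this theorem at all. Theorem \ref{Voroninstheorem} is stated in the introduction, attributed to Voronin with references \cite{voronin1, voronin2}, and used purely as background to motivate the paper's own results on the Epstein zeta function. There is therefore nothing in the paper to compare your proposal against.

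That said, your outline is a reasonable sketch of the standard proof (essentially Voronin's original argument as later streamlined by Bagchi and others): mean-square approximation of $\log\zeta$ by a truncated Euler product, a denseness/rearrangement step in a Bergman-type space to hit $\log f$ with a twisted product $\sum_p\log(1-e^{-2\pi i\theta_p}p^{-s})^{-1}$, and Kronecker--Weyl equidistribution to realize the chosen torus point by actual shifts $t$. You correctly identify the second step as the crux. If you were to flesh this out, the points that would need genuine work are: (i) making the mean-value reduction uniform over $K$ rather than on a single line, which requires a careful choice of $N(T)$ and a Cauchy-integral argument; and (ii) the denseness step, where one must show that no nonzero continuous linear functional annihilates all the rotated $u_p$, which is usually done via the prime number theorem and properties of entire functions of exponential type. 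For a complete account you may consult Steuding's monograph \cite{steuding} or the Bagchi thesis \cite{BagchiPhD}, both cited in the paper.
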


This theorem, known as \emph{Voronin's Universality Theorem}, shows that any nonvanishing analytic function in a small disc may be
approximated by a vertical shift of the Riemann zeta function. It has been improved and generalized in various directions (see Steuding's monograph \cite{steuding} and Matsumoto's survey paper \cite{matsumoto} for detailed discussions); for example it is known that the set $K$ may be chosen as any compact set with connected complement that lies in the vertical strip \begin{gather}\label{Ddef} 
D:=\left\{s \in \C: \tfrac12<\Re(s)<1\right\}. 
\end{gather} 
Let us also note that, since $\zeta(s)$ has an Euler product, it is necessary to assume that
the function $f$ in Theorem \ref{Voroninstheorem} is nonvanishing on $K$. However, for zeta functions without
an Euler product, such as the Hurwitz zeta function $\zeta(s,\alpha)$ with
rational or transcendental\footnote{For the case of algebraic irrational $\alpha$, the Hurwitz zeta function does not have an Euler product and it is likely to be universal without assuming the nonvanishing condition. However, proving this seems quite difficult and constitutes a major open problem in the theory of universality.} parameter $0<\alpha<1$, $\alpha \neq \frac12$, this condition can be removed. 

Similar universality theorems have been proved for large classes of zeta functions and $L$-functions. Here we will content ourselves with a short review of the situation for Dirichlet $L$-functions. In his thesis, Voronin \cite{voroninthesis} proved\footnote{Similar results were established (independently) by Gonek \cite{gonek} and Bagchi \cite{BagchiPhD}.} the \emph{joint universality} of Dirichlet $L$-functions, i.e.\ that vertical shifts of Dirichlet $L$-functions attached to nonequivalent Dirichlet characters can be used to simultaneously approximate any finite number of nonvanishing analytic functions on $K$. In a different direction, Bagchi \cite{BagchiPhD} has proved a universality theorem for Dirichlet $L$-functions $L(s,\chi)$ in which the imaginary shifts in the complex argument $s$ from Theorem \ref{Voroninstheorem} has been replaced by a variation of the character $\chi$ over the set of characters of prime modulus. To be precise, let $K$ be a compact subset of $D$ with connected complement and let $f$ be a nonvanishing continuous function on $K$ that is analytic in the
interior of $K$. Then, for any $\ve>0$, 
\begin{align}\label{Bagchisresult}
\liminf_{p \to \infty} \frac{1}{\varphi(p)}\#\Big \{\chi \hspace{-7pt}\mod p : \max_{s \in K} \left|L(s,\chi)-f(s)\right|<\varepsilon \Big \}>0. 
\end{align}

Recently Mishou and Nagoshi \cite{MiNa1, MiNa2} proved a related result
for Dirichlet $L$-functions associated with real characters. Let $\mathcal D^+$ (resp.\ $\mathcal D^-$) denote the set of positive fundamental discriminants (resp.\ negative fundamental discriminants) and define $$\mathcal D^{\pm}(X):=\big\{d\in\mathcal D^{\pm} : |d|\leq X\big\}.$$ For a discriminant $d$, we let $\chi_d$ denote the quadratic Dirichlet character
modulo $|d|$ defined by the Kronecker symbol $\chi_d(n)=(\frac dn)$. 

\begin{thm}[Mishou-Nagoshi]\label{MN-theorem}
Let $\Omega$ be a simply connected
region in $D$ that is symmetric with respect to the real axis. Then, for any
$\varepsilon>0$, any compact set $K\subset\Omega$ and any nonvanishing holomorphic
function $f$ on $\Omega$ which takes positive real values on
$\Omega\cap\R$, we have
\begin{align*}
\liminf_{X \to \infty}\frac{1}{\#\mathcal D^{\pm}(X)} \# \Big\{d\in\mathcal D^{\pm}(X) : \max_{s \in
K} \left|L(s,\chi_d)-f(s)\right|<\varepsilon  \Big\}>0.  
\end{align*}
\end{thm}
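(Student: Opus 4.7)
The plan is to follow the Voronin-Bagchi strategy as adapted to the family of real quadratic characters $\chi_d$. In contrast with the setting of \eqref{Bagchisresult}, where the local data $\chi(p)$ ranges densely on the unit circle as $\chi$ varies modulo $p$, here each $\chi_d(p)$ is constrained to $\{-1,0,+1\}$. This rigidity is what forces the extra hypotheses on $f$: reality (because $L(s,\chi_d)$ is real on $\Omega\cap\R$) and positivity (so that $\log f$ admits a continuous real logarithm).

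First I would fix a compact set $K'\subset\Omega$ containing $K$ in its interior and put $g:=\log f$, which is holomorphic on $\Omega$ and real on $\Omega\cap\R$. The main analytic input is a real denseness lemma of Pechersky/Bagchi type: for every such $g$ and every $\ve_1>0$ there exist a finite set $P$ of primes and signs $(\eta_p)_{p\in P}\in\{-1,+1\}^P$ with
$$\max_{s\in K'}\bigg|g(s)+\sum_{p\in P}\log\bigl(1-\eta_p p^{-s}\bigr)\bigg|<\ve_1.$$
I would prove this via Hahn-Banach: it suffices to show that no nonzero real Borel measure $\mu$ on $K'$ annihilates every function $\log(1-p^{-s})$ and $\log(1+p^{-s})$. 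Expanding in $p^{-ks}$, this reduces to a uniqueness statement for certain Dirichlet-like series that can be established by a Laplace-transform argument analogous to the one in Bagchi's thesis, worked throughout with real coefficients.

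Next, the distribution of $(\chi_d(p))_{p\in P}$ as $d$ varies over $\mathcal D^{\pm}(X)$ is as follows: by quadratic reciprocity each $\chi_d(p)$ depends only on $d$ modulo $4p$, so the Chinese Remainder Theorem and standard counting of fundamental discriminants in arithmetic progressions give, for any prescribed pattern $(\eta_p)_{p\in P}\in\{-1,+1\}^P$,
$$\lim_{X\to\infty}\frac{\#\{d\in\mathcal D^{\pm}(X):\chi_d(p)=\eta_p\ \forall p\in P\}}{\#\mathcal D^{\pm}(X)}=c_P>0.$$
Finally, to pass from partial Euler products to the $L$-function itself, I would prove an on-average approximation: for $Y$ tending to infinity sufficiently slowly with $X$ and all but $o(\#\mathcal D^{\pm}(X))$ of the $d\in\mathcal D^{\pm}(X)$,
$$\max_{s\in K'}\Big|L(s,\chi_d)-\prod_{p\leq Y}\bigl(1-\chi_d(p)p^{-s}\bigr)^{-1}\Big|<\ve_2.$$
This mean-square estimate follows from the large sieve inequality for real characters together with contour shifting that exploits the strip $\Re(s)>\tfrac12$. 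Combining the three ingredients, with $P\supseteq\{p\leq Y\}$, yields a positive proportion of $d\in\mathcal D^{\pm}(X)$ for which $L(\cdot,\chi_d)$ lies within $\varepsilon$ of $f$ on $K$.

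I expect the main obstacle to be the real denseness lemma. In the classical universality setting one relies on Kronecker-Weyl equidistribution of $\{(p^{-it})_p\}$ on the infinite-dimensional torus, after which Hahn-Banach is straightforward. Here the ``parameters'' live in the sparse set $\{-1,+1\}^P$, so one must show that sign choices alone give enough flexibility to approximate an arbitrary real holomorphic $g$; this is where the positivity assumption on $f$ becomes essential and cannot be relaxed without accessing imaginary parts, which are unavailable for real characters.
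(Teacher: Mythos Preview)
The paper does not contain a proof of Theorem~\ref{MN-theorem}: it is quoted as a background result, attributed to Mishou and Nagoshi \cite{MiNa1,MiNa2}, and serves only as motivation and a point of comparison for the authors' own Theorems~\ref{UNIV1}--\ref{UNIV3} on the Epstein zeta function. There is therefore nothing in the paper to compare your proposal against.

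That said, your outline is a faithful sketch of the Mishou--Nagoshi argument as it appears in \cite{MiNa1}. The three ingredients you isolate---a real denseness lemma producing a target sign pattern $(\eta_p)_{p\in P}$, the equidistribution of $(\chi_d(p))_{p\in P}$ over $\{-1,+1\}^P$ via reciprocity and counting of fundamental discriminants in progressions, and a mean-square approximation of $L(s,\chi_d)$ by a truncated Euler product using the large sieve for real characters---are precisely the components of their proof, and you have correctly located the positivity hypothesis on $f$ as the device that makes $\log f$ available and hence reduces the problem to an additive approximation by $\sum_p \log(1-\eta_p p^{-s})$. One small point of care in the assembly: the finite set $P$ produced by the denseness lemma is fixed first, and the tail beyond $P$ must then be controlled in mean square; your phrasing ``with $P\supseteq\{p\le Y\}$'' slightly inverts this order, though the underlying logic is sound.
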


The purpose of the present paper is to prove universality results for the Epstein zeta function that turn out to have similarities with both Theorem \ref{MN-theorem} and Bagchi's result described in \eqref{Bagchisresult} above. In order to state and describe our results, we first need to properly introduce the setting.

Let $X_n$ denote the space of all $n$-dimensional lattices $L\subset\R^n$ of covolume one and let $\mu_n$ denote Siegel's measure \cite{siegel} on $X_n$, normalized to be a probability measure. For $L\in X_n$ and $\Re(s)>\frac{n}{2}$, the Epstein zeta function is defined by
\begin{align*}
E_n(L,s):=\sum_{\vecm\in L\setminus\{\vec0\}}|\vecm|^{-2s}\,.
\end{align*}
$E_n(L,s)$ has an analytic continuation to $\C$ except for a simple pole at $s=\frac{n}{2}$ with residue $\pi^{\frac n2}\Gamma(\frac{n}{2})^{-1}$. Furthermore, $E_n(L,s)$ satisfies the functional equation
\begin{align}\label{FUNCTIONALEQ}
F_n(L,s)=F_n(L^*,\sfrac{n}{2}-s),
\end{align}
where $F_n(L,s):=\pi^{-s}\Gamma(s)E_n(L,s)$ and $L^*$ is the dual lattice of $L$. The Epstein zeta function has many properties in common with the Riemann zeta function. In fact, the functions $E_n(L,s)$ (actually a slightly more general family of functions) were introduced by Epstein \cite{paulepstein1,paulepstein2} in an attempt to find the most general form of a function satisfying a functional equation of the same type as the Riemann zeta function. Note in particular the relation
\begin{align*}
E_1(\Z,s)=2\zeta(2s).
\end{align*}
However, we stress that there are also important differences between $E_n(L,s)$ and $\zeta(s)$. Typically $E_n(L,s)$ has no Euler product and it is well known that the Riemann hypothesis for $E_n(L,s)$ generally fails (cf., e.g., \cite{SodStrom} and the references therein).\footnote{Here the words \emph{typically} and \emph{generally} are to be interpreted in terms of the measure $\mu_n$.} 

Let $V_n$ denote the volume of the $n$-dimensional unit ball. We recall the explicit formula
\begin{equation*}
V_n=\frac{\pi^{n/2}}{\Gamma(\frac n2+1)}\,,
\end{equation*} 
and stress that $V_n$ decays extremely fast as $n\to\infty$. In most of our results $V_n$ will appear naturally as part of a factor normalizing $E_n(L,s)$. 

Our first main result is a universality theorem for $E_n(L,s)$ in the lattice aspect, i.e.\ a universality result where the lattice $L$ varies over the space $X_n$ but no vertical shifts are applied to the complex variable $s$. The situation is related to the one in Bagchi's theorem \eqref{Bagchisresult}, and similarly, in order to obtain still finer approximations it is natural to consider the limit $n\to\infty$. Let us also point out that the relation with Theorem \ref{MN-theorem} lies in the fact that both $L(s,\chi_d)$ and $E_n(L,s)$ are real-valued for real values of $s$, resulting in the same sort of conditions on the involved functions and regions. However, since $E_n(L,s)$ typically has no Euler product, the nonvanishing condition on the function $f$ can be removed.  

\begin{thm}\label{UNIV1}
Let $\Omega$ be a simply connected region in $D$ that is symmetric with respect to the real axis. Then, for any $\ve>0$, any compact set $K\subset\Omega$ and any holomorphic function $f$ on $\Omega$ that is real-valued on $\Omega\cap\R$, we have
\begin{align*}
\liminf_{n\to\infty}{\rm Prob}_{\mu_n}\left\{L\in X_n : \max _{s\in K}\left|2^{s-1}V_n^{-s}E_n\left(L,\frac{ns}{2}\right)-f(s)\right|<\ve\right\}>0.
\end{align*}
\end{thm}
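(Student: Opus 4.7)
My plan is to turn the normalized Epstein zeta function into a random Dirichlet series indexed by the lengths of short lattice vectors and to establish universality for that series as $n\to\infty$.

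\emph{Step 1 (Rewriting).} For $L\in X_n$, list the nonzero vectors of $L$ in pairs $\{\pm m_i\}$ by increasing length, and set $T_i(L):=\tfrac{1}{2}V_n|m_i|^n$. Using $|m|^{-ns}=2^{-s}V_n^{s}T^{-s}$ a direct computation gives
\[2^{s-1}V_n^{-s}E_n\!\left(L,\tfrac{ns}{2}\right)=\sum_{i\ge 1}T_i(L)^{-s},\]
initially for $\Re s>1$ and then on all of $D$ by analytic continuation. The theorem is thereby reduced to a universality statement, in the lattice aspect, for the random Dirichlet series $Z_n(L,s):=\sum_i T_i(L)^{-s}$.

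\emph{Step 2 (Dirichlet polynomial approximation).} I would next establish (or invoke) an approximation theorem of the shape: for any $f,\ve,K,\Omega$ as in the theorem, there exist $N\ge 1$ and positive reals $t_1,\dots,t_N$ with
\[\max_{s\in K}\Big|\sum_{i=1}^{N}t_i^{-s}+\mathcal{R}_N(s)-f(s)\Big|<\ve/3,\]
where $\mathcal{R}_N(s):=\zeta(s)-\sum_{i=1}^{N}i^{-s}$ is the analytic continuation to $D$ of the ``Riemann tail'' $\sum_{i>N}i^{-s}$. The key point is that $\mathcal{R}_N$ is large and negative on $\Omega\cap\R$ for large $N$, which supplies the flexibility needed to approximate real-valued (possibly sign-changing) $f$ by sums $\sum t_i^{-s}$ that are intrinsically positive on the real axis.

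\emph{Step 3 (Realization and tail control).} By the Poisson limit theorem cited in the abstract, the joint law of $(T_1(L),\dots,T_N(L))$ under $\mu_n$ converges to that of the first $N$ points of a Poisson process of intensity $1$ on $(0,\infty)$; since this law charges every open neighborhood of the prescribed $(t_1,\dots,t_N)$, the event $|T_i(L)-t_i|<\delta$ (for $i\le N$) has positive $\liminf_n$ probability, and for $\delta$ small it forces $|\sum_{i\le N}T_i(L)^{-s}-\sum t_i^{-s}|<\ve/3$ uniformly on $K$. For the tail $\sum_{i>N}T_i(L)^{-s}$, I would write it as a Stieltjes integral against the counting function $\mathcal{N}_L(t):=\#\{i:T_i(L)\le t\}$, split $\mathcal{N}_L(t)=t+E_L(t)$, observe that the ``$t$'' part contributes exactly $\mathcal{R}_N(s)$ after analytic continuation, and control the $E_L(t)$ contribution uniformly on $K$ using the strong uniform generalized circle problem error bound from the abstract. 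Summing the three errors gives the desired $\ve$-approximation on an event of positive $\liminf_n$ probability.

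\emph{Main obstacle.} I expect Step 2 to be the crux. Approximating an arbitrary $f$ that may change sign on $\Omega\cap\R$ by the always-positive-on-real-axis sums $\sum t_i^{-s}$, using only the fixed counterterm $\mathcal{R}_N$ coming from the deterministic Poisson tail, requires carefully exploiting that $|\mathcal{R}_N|$ is large on the real axis inside $D$ for large $N$, together with a Runge-type density statement for the family $\{\tau^{-s}\}_{\tau>0}$ of ``Dirichlet monomials''. A companion technical challenge is the need for a generalized circle problem error estimate uniform in the dimension $n$ and strong enough (essentially $O(t^{\theta})$ with some $\theta<1/2$) to make the Stieltjes computation in Step 3 converge uniformly on compacta $K\subset D$.
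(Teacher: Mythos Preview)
Your plan coincides with the paper's: the same normalization, the same split into a truncated piece realized via the Poisson limit and a tail controlled by the uniform circle-problem bound. The paper resolves the obstacle you flag in Step~2 not through a density argument for real monomials $\tau^{-s}$ but by quoting a lemma of Mishou--Nagoshi: any $h$ holomorphic on $\Omega$ and real on $\Omega\cap\R$ is approximable on $K$ by $\sum_{j\le N_0}a_j j^{-s}$ with $a_j\in\{-1,0,1\}$. Applying this to $h=\zeta-f$ and setting $b_j=1-a_j\in\{0,1,2\}$ (and $b_j=1$ for $N_0<j\le N$) turns the target for the truncated sum $\sum_{\mathcal V_j\le N}\mathcal V_j(L)^{-s}$ into $\sum_j b_j j^{-s}=\sum_{j=1}^{\tilde N}n_j^{-s}$, a sum of \emph{positive-integer} monomials; the Poisson realization step is then immediate and the sign worry evaporates.

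Two small corrections to Step~3. The ``$t$'' part of your Stieltjes splitting contributes $\int_N^\infty t^{-s}\,dt=N^{1-s}/(s-1)$, not $\mathcal R_N(s)$; the paper sidesteps this mismatch by comparing the counting function to $\lfloor t\rfloor$ rather than to $t$ (their auxiliary $S_{n,L}(t)=N_{n,L}(t)-\#\{j\in\Z^+:j\le t\}$), so that the tail integral is exactly $\sum_{\mathcal V_j>N}\mathcal V_j^{-s}-\sum_{j>N}j^{-s}$. And you do not need a circle-problem exponent $\theta<\tfrac12$: since $K\subset D$ is compact one has $\sigma_0=\min_{s\in K}\Re s>\tfrac12$, and the available uniform bound $|R_{n,L}(t)|\ll t^{1/2}(\log t)^{3/2+\varepsilon}$ (holding with $\mu_n$-probability $\ge 1-C^{-1}$, uniformly in $n$) already makes the integrated tail $\ll N^{1/2+\eta-\sigma_0}$ after integration by parts, for any $\eta<\sigma_0-\tfrac12$.
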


\begin{remark}
In particular it follows from Theorem \ref{UNIV1} that, given $\ve>0$, $K\subset\Omega$ and $f$ as above, there exists some $n\in\Z^+$ and a lattice $L\in X_n$ such that 
\begin{align*}
\max _{s\in K}\left|2^{s-1}V_n^{-s}E_n\left(L,\frac{ns}{2}\right)-f(s)\right|<\ve.
\end{align*}
\end{remark}

As an immediate consequence of Theorem \ref{UNIV1}, we have the following denseness result.

\begin{cor}
For any fixed $s\in D\setminus (D\cap\R)$ the set $$\left\{2^{s-1}V_n^{-s}E_n\left(L,\frac{ns}{2}\right):n\in\Z^+ , L\in X_n\right\}$$ is dense in $\C$. Moreover, for any fixed $x\in D\cap\R$, we have 
\begin{equation}\label{MORETHANDENSE}
\left\{2^{x-1}V_n^{-x}E_n\left(L,\frac{nx}{2}\right):n\in\Z^+ , L\in X_n\right\}=\R.
\end{equation}  
\end{cor}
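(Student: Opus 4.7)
The plan is to deduce both statements as essentially immediate consequences of Theorem \ref{UNIV1}, the only extra input beyond it being the connectedness of $X_n$ together with the intermediate value theorem for the equality statement.

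For the denseness assertion, I will fix $s_0=\sigma_0+i\tau_0\in D\setminus(D\cap\R)$ (so $\tau_0\neq 0$) and an arbitrary target $w=\alpha+i\beta\in\C$, and consider the affine polynomial
$$f(s):=\alpha+\beta\,\frac{s-\sigma_0}{\tau_0}.$$
This has real coefficients, is therefore real-valued on $\R$, and satisfies $f(s_0)=w$. I will take $\Omega\subset D$ to be any simply connected region symmetric about the real axis and containing $s_0$ (and hence $\bar s_0$); a thin open rectangle around the vertical segment from $\bar s_0$ to $s_0$ suffices. Applying Theorem \ref{UNIV1} with $K=\{s_0\}\subset\Omega$ and an arbitrary $\ve>0$ will then produce $n\in\Z^+$ and $L\in X_n$ with $|2^{s_0-1}V_n^{-s_0}E_n(L,ns_0/2)-w|<\ve$, placing $w$ in the closure of the set in question.

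For the equality \eqref{MORETHANDENSE}, I fix $x\in D\cap\R$ and any $y\in\R$. Applying Theorem \ref{UNIV1} twice, with $K=\{x\}$, $\ve=\tfrac{1}{2}$, and the two constant real-valued target functions $f\equiv y\pm 1$, the positive liminf in $n$ will provide, for every sufficiently large $n$, lattices $L_{+}^{(n)},L_{-}^{(n)}\in X_n$ satisfying
$$2^{x-1}V_n^{-x}E_n(L_{+}^{(n)},nx/2)>y+\tfrac{1}{2}\quad\text{and}\quad 2^{x-1}V_n^{-x}E_n(L_{-}^{(n)},nx/2)<y-\tfrac{1}{2}.$$
I will then fix one such $n\geq 2$. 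Since $X_n\cong\SL_n(\R)/\SL_n(\Z)$ is path-connected, and $L\mapsto E_n(L,nx/2)$ is continuous and real-valued on $X_n$ (real-valuedness following from the Schwarz reflection principle applied to the meromorphic continuation), the intermediate value theorem applied along a path in $X_n$ joining $L_{-}^{(n)}$ to $L_{+}^{(n)}$ will produce $L\in X_n$ with $2^{x-1}V_n^{-x}E_n(L,nx/2)=y$. The one technical item beyond the direct invocation of Theorem \ref{UNIV1} is verifying continuity in $L$ of the meromorphically continued Epstein zeta function at the fixed real argument $nx/2$, safely away from the pole at $n/2$; this should follow routinely from Epstein's standard theta-integral representation, using joint continuity of $L\mapsto\theta_L(t)$ and a uniform lower bound on the length of the shortest vector over compact subsets of $X_n$ to pass continuity through the integrals.
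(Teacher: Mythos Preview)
Your proof is correct and follows essentially the same approach as the paper. The paper treats the corollary as an immediate consequence of Theorem \ref{UNIV1}, noting only that the equality \eqref{MORETHANDENSE} (as opposed to mere denseness in $\R$) follows from continuity of the Epstein zeta function in the lattice variable together with the intermediate value theorem; your argument fills in exactly these details, constructing the appropriate real-on-$\R$ target functions and invoking path-connectedness of $X_n$.
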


To prove the first part of the corollary, it is sufficient to note that for any $c \in \C$ we may use  Theorem \ref{UNIV1} to approximate $f(s)=c$ on the set $K=\{s\}$ to arbitrary precision.
Next, to prove \eqref{MORETHANDENSE} we note that for any $N \in\Z^+$ we may use Theorem \ref{UNIV1}, with $\varepsilon=1$ and $K=\{x\}$, to approximate the two functions $f_1(s)=-N-1$ and $f_2(s)=N+1$. It follows that for all sufficiently large $n$ (depending on $N$) there exist lattices $L_1,L_2 \in X_n$ such that $2^{x-1}V_n^{-x}E_n\left(L_1,\frac{nx}{2}\right)<-N$ and $2^{x-1}V_n^{-x}E_n\left(L_2,\frac{nx}{2}\right)>N$. We note that in contrast to the situation in \cite[Corollary 1.2]{MiNa1}, the lattice variable $L$ varies continuously in $X_n$ and, moreover, the Epstein zeta function is continuous in the lattice variable.\footnote{In fact, $E_n(L,s)$ is an Eisenstein series associated to a maximal parabolic subgroup of $\SL(n,\Z)$ (see, e.g., \cite[Section 10.7]{goldfeld}). See also \cite[Theorem 2]{terras}.} Hence, since $X_n$ is connected and the Epstein zeta function is real-valued on the real line, the intermediate value theorem implies that $[-N,N] \subseteq \{2^{x-1}V_n^{-x}E_n\left(L,\frac{nx}{2}\right): L \in X_n \}$ for all sufficiently large $n$. This confirms that the left-hand side of \eqref{MORETHANDENSE} is not only dense in $\R$, but in fact equal to $\R$.

Let us also point out that Theorem \ref{UNIV1}, together with the Weierstrass approximation theorem, readily gives a universality result for approximation of continuous functions on closed subintervals of $(\frac12,1)$.  

\begin{cor}\label{REALUNIV}
Let $\frac12<a<b<1$. Then, for any $\ve>0$ and any continuous real-valued function $f$ on $[a,b]$, we have
\begin{align*}
\liminf_{n\to\infty}{\rm Prob}_{\mu_n}\left\{L\in X_n : \max _{x\in [a,b]}\left|2^{x-1}V_n^{-x}E_n\left(L,\frac{nx}{2}\right)-f(x)\right|<\ve\right\}>0.
\end{align*}
\end{cor}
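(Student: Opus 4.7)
The plan is to reduce the real-variable approximation problem to the complex-analytic universality statement of Theorem \ref{UNIV1} via the Weierstrass approximation theorem. Given $\ve>0$ and a continuous real-valued function $f$ on $[a,b]$, I would first apply the Weierstrass theorem to produce a polynomial $p$ with real coefficients satisfying $\max_{x\in[a,b]}|f(x)-p(x)|<\ve/2$. Such a $p$ is entire and takes real values on $\R$, so it is a perfectly admissible test function for Theorem \ref{UNIV1}.

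Next I would set up a suitable region. Since $\frac12<a<b<1$, one can pick $\delta>0$ small enough that the open rectangle
$$\Omega:=\bigl\{s\in\C : a-\delta<\Re(s)<b+\delta,\ |\Im(s)|<\delta\bigr\}$$
is contained in $D$. This $\Omega$ is convex (hence simply connected) and symmetric with respect to the real axis, and the compact set $K:=[a,b]$ satisfies $K\subset\Omega$. The polynomial $p$, restricted to $\Omega$, is holomorphic and real on $\Omega\cap\R$, so the hypotheses of Theorem \ref{UNIV1} are met.

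Invoking Theorem \ref{UNIV1} with the data $(\Omega,K,p,\ve/2)$ yields
$$\liminf_{n\to\infty}{\rm Prob}_{\mu_n}\Bigl\{L\in X_n : \max_{s\in K}\bigl|2^{s-1}V_n^{-s}E_n(L,\tfrac{ns}{2})-p(s)\bigr|<\tfrac{\ve}{2}\Bigr\}>0.$$
Since $K=[a,b]$, a lattice $L$ in the above event satisfies $|2^{x-1}V_n^{-x}E_n(L,nx/2)-p(x)|<\ve/2$ for every $x\in[a,b]$, and combining with $|p(x)-f(x)|<\ve/2$ gives $\max_{x\in[a,b]}|2^{x-1}V_n^{-x}E_n(L,nx/2)-f(x)|<\ve$ by the triangle inequality. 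Thus the event in the corollary contains the event produced by Theorem \ref{UNIV1}, and the positivity of the liminf is inherited.

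No genuine obstacle arises here: Theorem \ref{UNIV1} has been carefully formulated to allow holomorphic test functions on simply connected regions symmetric about $\R$, and Weierstrass provides the exact bridge from continuous real functions on intervals to this holomorphic setting. The only mild subtlety is ensuring the polynomial approximant is real-coefficiented (which is automatic from the standard Weierstrass theorem applied to real functions) so that the reality condition on $\Omega\cap\R$ needed for Theorem \ref{UNIV1} is satisfied.
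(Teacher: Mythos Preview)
Your proof is correct and follows exactly the approach the paper indicates: the paper states that Corollary~\ref{REALUNIV} follows from Theorem~\ref{UNIV1} together with the Weierstrass approximation theorem, which is precisely what you carry out. The choice of the rectangular region $\Omega$ and the triangle-inequality conclusion are the natural implementation of this one-line argument.
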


We will now use Corollary \ref{REALUNIV} to give a short and more elementary proof of (one part of) \cite[Corollary 1.5]{epstein2}. Let $\ve<1$ and let, for any $\delta\in(0,\frac14)$,  $K_{\delta}$ denote the closed interval $[\frac12+\delta,1-\delta]\subset (\frac12,1)$. Now, by applying Corollary \ref{REALUNIV} with the continuous function $f(x)=-1$ on $K_{\delta}$, we obtain the following result.

\begin{cor}\label{NONZERO}
For any $\delta\in(0,\frac14)$, we have
\begin{align*}
\liminf_{n\to\infty}{\rm Prob}_{\mu_n}\left\{L\in X_n\,:\, E_n\left(L,\frac{nx}{2}\right)<0 \text{  for all  } x\in[\sfrac12+\delta,1-\delta]\right\}>0.
\end{align*} 
\end{cor}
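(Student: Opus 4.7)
The plan is to deduce Corollary~\ref{NONZERO} directly from Corollary~\ref{REALUNIV} by choosing a suitable target function. First I would set $a=\frac12+\delta$ and $b=1-\delta$, which gives $[a,b]=K_\delta\subset(\frac12,1)$ as required (note that $\delta<\frac14$ ensures $a<b$). Next I would take the constant function $f\equiv -1$ on $K_\delta$, which is trivially continuous and real-valued, and fix any $\ve\in(0,1)$, say $\ve=\frac12$.

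With these choices, Corollary~\ref{REALUNIV} yields
\begin{align*}
\liminf_{n\to\infty}{\rm Prob}_{\mu_n}\!\left\{L\in X_n : \max_{x\in K_\delta}\left|2^{x-1}V_n^{-x}E_n\!\left(L,\tfrac{nx}{2}\right)+1\right|<\ve\right\}>0.
\end{align*}
For every lattice $L$ lying in this event, the triangle inequality forces $2^{x-1}V_n^{-x}E_n(L,\tfrac{nx}{2})<-1+\ve<0$ uniformly in $x\in K_\delta$. Since the normalizing factor $2^{x-1}V_n^{-x}$ is strictly positive for real $x$, dividing through preserves the inequality and gives $E_n(L,\tfrac{nx}{2})<0$ for every $x\in K_\delta$. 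The event we have constructed is therefore contained in the event appearing in Corollary~\ref{NONZERO}, and monotonicity of probability finishes the argument.

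There is essentially no obstacle here: the entire content is packaged inside Corollary~\ref{REALUNIV}, which was itself obtained from Theorem~\ref{UNIV1} via the Weierstrass approximation theorem. The one mildly delicate point worth stressing is the rigid sign forcing: one must pick the tolerance $\ve$ strictly smaller than $|f|=1$, so that the approximation controls not only magnitude but also sign. This is precisely where it matters that Theorem~\ref{UNIV1} (and hence Corollary~\ref{REALUNIV}) permits target functions that vanish or change sign, a flexibility unavailable for zeta functions with Euler products such as $\zeta(s)$, and which is what makes a negativity result like Corollary~\ref{NONZERO} a genuine consequence of universality.
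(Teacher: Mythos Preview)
Your proof is correct and follows exactly the approach the paper itself uses: apply Corollary~\ref{REALUNIV} on $K_\delta=[\frac12+\delta,1-\delta]$ with the constant target $f\equiv-1$ and any $\ve<1$, then observe that positivity of the normalizing factor $2^{x-1}V_n^{-x}$ transfers the sign. There is nothing to add.
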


\begin{remark}
It follows from \cite[Corollary 1.5]{epstein2} that we can replace the $\liminf$ in Corollary \ref{NONZERO} by a proper limit. Let us also point out that it follows from \cite[Corollary 1.7]{epstein2} that this limit probability tends to zero as $\delta$ tends to zero.
\end{remark}

Our next corollary discusses zeros of $E_n\left(L,\frac{nx}{2}\right)$ in the interval $\frac12<x<1$. Its proof is an application of Theorem \ref{UNIV1} with the function $f(s)=\sin\big(\pi\big(\frac{m(s-a)}{b-a}+\frac12\big)\big)$, together with Rouche's theorem and the observation that since $E_n\left(L,\frac{nx}{2}\right)$ is real-valued the zeros of $E_n\left(L,\frac{ns}{2}\right)$ approximating the zeros of $f(s)$ must remain on the real line.\footnote{If we allow the number of zeros to also be greater than $m$, then this result is an immediate consequence of Corollary \ref{REALUNIV}.}

\begin{cor}\label{ZEROS}
Let $\frac12<a<b<1$ and let $m\in\Z^+$. Then we have 
\begin{align*}
\liminf_{n\to\infty}{\rm Prob}_{\mu_n}\left\{L\in X_n : E_n\left(L,\frac{nx}{2}\right) \text{has exactly $m$ zeros in $[a,b]$}\right\}>0.
\end{align*}
\end{cor}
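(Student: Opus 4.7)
The plan is to follow the sketch suggested in the statement: apply Theorem~\ref{UNIV1} to an explicit sine wave whose zeros on $[a,b]$ are known, and then use Rouch\'e's theorem to transfer the zero count to the Epstein zeta function. The target function is
\begin{align*}
f(s):=\sin\!\Bigl(\pi\Bigl(\frac{m(s-a)}{b-a}+\tfrac12\Bigr)\Bigr),
\end{align*}
which is entire, real-valued on $\R$, and whose zeros in $[a,b]$ are precisely the $m$ simple roots $x_k:=a+(b-a)(2k-1)/(2m)$, $k=1,\ldots,m$, all lying in the open interval $(a,b)$. I would pick $r>0$ small enough that the closed discs $\overline{B(x_k,r)}$ are pairwise disjoint and contained in the strip $\{s\in\C:a<\Re(s)<b\}$, set $K:=[a,b]\cup\bigcup_k\overline{B(x_k,r)}$, and let $\Omega$ be a thin simply connected neighbourhood of $[a,b]$ contained in $D$ and symmetric about the real axis, with $K\subset\Omega$.

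Since $f$ does not vanish on the compact set
\begin{align*}
C:=\bigcup_{k=1}^m\{s:|s-x_k|=r\}\ \cup\ \Bigl([a,b]\setminus\bigcup_{k=1}^m B(x_k,r)\Bigr),
\end{align*}
the number $\ve:=\tfrac12\min_{s\in C}|f(s)|$ is strictly positive. Writing $g_n(L,s):=2^{s-1}V_n^{-s}E_n(L,ns/2)$, Theorem~\ref{UNIV1} guarantees that
\begin{align*}
\liminf_{n\to\infty}\mu_n\bigl(\{L\in X_n:\max_{s\in K}|g_n(L,s)-f(s)|<\ve\}\bigr)>0.
\end{align*}
For each such $L$, Rouch\'e's theorem on the circles $|s-x_k|=r$ forces $g_n(L,\cdot)$ to have exactly one zero (counted with multiplicity) in each disc $B(x_k,r)$, while on $[a,b]\setminus\bigcup_k B(x_k,r)$ the estimate $|g_n(L,x)|\ge|f(x)|-\ve>0$ rules out any further real zero.

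What remains is a short parity argument ensuring that each Rouch\'e-produced zero is actually real. Since $E_n(L,s)$, and hence $g_n(L,s)$, is real on $\R$, the non-real zeros of $g_n(L,\cdot)$ occur in complex-conjugate pairs, and the disc $B(x_k,r)$ is invariant under conjugation; a non-real zero inside would force a second zero inside, contradicting the Rouch\'e count of one. The unique zero in $B(x_k,r)$ therefore lies in $(x_k-r,x_k+r)\subset(a,b)$, and since the factor $2^{s-1}V_n^{-s}$ is nowhere zero, it is a genuine zero of $E_n(L,nx/2)$. Hence $E_n(L,nx/2)$ has exactly $m$ zeros in $[a,b]$ for every $L$ in the set provided by Theorem~\ref{UNIV1}, completing the proof. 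The only subtle point is this parity step, and even that is essentially automatic; all the analytic difficulty is already packaged inside Theorem~\ref{UNIV1}, so this corollary is expected to be among the easiest consequences of it.
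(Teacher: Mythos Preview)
Your proof is correct and follows exactly the approach the paper outlines: apply Theorem~\ref{UNIV1} with $f(s)=\sin\!\bigl(\pi\bigl(\tfrac{m(s-a)}{b-a}+\tfrac12\bigr)\bigr)$, use Rouch\'e's theorem on small discs about the $m$ simple real zeros of $f$, and invoke the reality of $E_n(L,\tfrac{nx}{2})$ (hence of $g_n(L,x)$) to force each Rouch\'e zero to be real via the conjugate-pair parity argument. The paper gives only a one-sentence sketch; your write-up supplies the details (choice of $K$, the set $C$, the explicit $\ve$, and the exclusion of additional real zeros outside the discs) faithfully and without gaps.
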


Let us give a brief sketch of the proof of Theorem \ref{UNIV1}, which is given in full detail in Section \ref{Proof1}. To begin, we decompose the expression under consideration into two parts as
\begin{align}\label{elementarysplitting}
\left(2^{s-1}V_n^{-s}E_n\left(L,\frac{ns}{2}\right)-\zeta(s)\right)+\big(\zeta(s)-f(s)\big),
\end{align}
and approximate the second part, i.e.\ $\zeta(s)-f(s)$ (which is holomorphic on the compact set $K$), to any desired accuracy by a Dirichlet polynomial. We continue by dividing the first part of \eqref{elementarysplitting} into a main term and a tail term. Using a recent result of the second author \cite{poisson} on the distribution of lengths of lattice vectors in a random lattice of large dimension, we show that the main term approximate the Dirichlet polynomial found above as well as desired for a positive proportion of lattices $L\in X_n$. To conclude the proof we use a strong uniform estimate of the error term in the generalized circle problem (cf.\ \cite{epstein2,schmidt}; see also Section \ref{Poissonsection}) to show that the tail term is sufficiently small.

Using the same general idea of proof, we arrive at our second main result. As far as we are aware, this is the first example of a universality theorem that is valid in the half-plane of absolute convergence (of the Dirichlet series under consideration).

\begin{thm}\label{UNIV2}
Let $\Omega$ be a simply connected region in the half-plane $\Re(s)>\frac12$ that is symmetric with respect to the real axis. Then, for any $\ve>0$, any compact set $K\subset\Omega$ and any holomorphic function $f$ on $\Omega$ that is real-valued on $\Omega\cap\R$, we have
\begin{align*}
\liminf_{n\to\infty}{\rm Prob}&_{\mu_n\times \mu_n}\bigg\{(L_1,L_2)\in X_n^2 : \\
&\max _{s\in K}\left|2^{s-1}V_n^{-s}E_n\left(L_1,\frac{ns}{2}\right)-2^{s-1}V_n^{-s}E_n\left(L_2,\frac{ns}{2}\right)-f(s)\right|<\ve\bigg\}>0.
\end{align*}
\end{thm}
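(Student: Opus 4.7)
The plan is to mirror the proof of Theorem \ref{UNIV1} but to exploit the subtraction of two independent lattices so that the deterministic ``$\zeta(s)$-contribution'' cancels and the analysis extends from the critical strip $D$ to the full half-plane $\Re(s) > \tfrac12$. Heuristically, if one writes $2^{s-1}V_n^{-s}E_n(L, \tfrac{ns}{2}) = \zeta(s) + X_n(L, s)$, where $X_n(L, s)$ is the random fluctuation around the $n \to \infty$ limit, then the quantity of interest is $X_n(L_1, s) - X_n(L_2, s)$, which contains no deterministic main term.

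Starting from $K$, $\Omega$ and $f$ as in the theorem, the first step is to approximate $f$ on $K$ by a Dirichlet polynomial $P(s) = \sum_{k=1}^{J} a_k \lambda_k^{-s}$ with $a_k \in \R$ and $\lambda_k > 0$, to within $\ve/3$. Since $K$ has connected complement in the simply connected region $\Omega$, and $f$ is holomorphic on $\Omega$ and real on $\Omega \cap \R$, such a $P$ exists by Mergelyan's theorem combined with a standard substitution that represents polynomials in a chosen $e^{-cs}$ as linear combinations of $\lambda^{-s}$'s.

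The second step is to use the identity
\begin{equation*}
2^{s-1}V_n^{-s}E_n(L, \tfrac{ns}{2}) = \sum_{\vecm \in L_+} \bigl(\tfrac{V_n}{2}|\vecm|^n\bigr)^{-s},
\end{equation*}
where $L_+$ is a set of representatives for $(L \setminus \{\vec0\})/\{\pm 1\}$ and the identity is initially valid for $\Re(s) > 1$, and to split the sum at a threshold $R = R(n) \to \infty$ into a head (over $\tfrac{V_n}{2}|\vecm|^n \leq R$) and a tail. The heads are finite Dirichlet polynomials, and by \cite{poisson} the multisets $\bigl\{\tfrac{V_n}{2}|\vecm|^n\bigr\}_{\vecm \in L_+}$ for $L_1$ and $L_2$ sampled independently jointly converge in distribution, as $n \to \infty$, to two independent Poisson point processes on $(0,\infty)$. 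A standard support argument, combined with the observation that any target real-coefficient Dirichlet polynomial $\sum a_k \lambda_k^{-s}$ can be realised (up to arbitrarily small uniform error on $K$) as a difference of two unit-coefficient Dirichlet polynomials with nearby frequencies, then yields approximation of $P(s)$ by the difference of the head parts to within $\ve/3$ with positive probability.

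The principal new obstacle, relative to the proof of Theorem \ref{UNIV1}, is the uniform control of the tail difference on a compact subset $K$ of the full half-plane $\Re(s) > \tfrac12$. For a single lattice the tail is not small in this larger region: partial summation produces a deterministic main term of order $R^{1-s}/(s-1)$ reflecting the asymptotic density of the sequence $\bigl\{\tfrac{V_n}{2}|\vecm|^n\bigr\}$. This main term is independent of $L$, however, and therefore cancels in the difference, leaving only an integral against the error term in the generalized circle problem. Invoking the strong uniform estimate for this error (the same one used in the proof of Theorem \ref{UNIV1} and recalled in Section \ref{Poissonsection}), one verifies that the tail difference is uniformly $o(1)$ on $K$ with probability tending to $1$ as $n \to \infty$. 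The triangle inequality then combines the Mergelyan, Poisson, and circle-problem approximations into the desired conclusion.
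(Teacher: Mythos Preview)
Your proposal is correct and follows essentially the same route as the paper: Dirichlet-polynomial approximation of $f$, realization of that polynomial as a difference of head-sums via the Poisson convergence of the volumes $\mathcal V_j(L_i)$, and control of the tail-difference via the circle-problem estimate (Theorem~\ref{Rnthm}) after the key observation that the deterministic main terms cancel. The only cosmetic differences are that the paper packages your Mergelyan step and the subsequent ``realization as a difference of unit-coefficient polynomials'' into a single self-contained lemma (Lemma~\ref{APPROX2}) that produces integer-coefficient Dirichlet polynomials directly via Runge's theorem and finite differences, and it works with a fixed cutoff $\Lambda$ rather than one depending on $n$.
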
 

Classical Voronin universality (variants of Theorem 1.1) for the Epstein zeta function follows in some special cases, such as when the Epstein zeta function is (a constant multiple of) a Dedekind zeta function of an imaginary quadratic number field, since then it is a product of $\zeta(s)$ and a Dirichlet $L$-function, and universality follows from the joint universality of Dirichlet $L$-functions. Universality has also been proved in the case when the Epstein zeta function can be written as a linear combination of Hecke $L$-functions by joint universality for such functions, see \cite[pp.\ 279-284]{KarVor} and \cite{voronin3}. The general case seems to be more difficult. While we cannot prove classical universality for $E_n(L,s)$ with any single given lattice, we will prove universality for almost every lattice $L \in X_n$. At present time our methods produce a result in the strip $\frac34<\Re(s)<1$.\footnote{We note that a result recently stated by Blomer \cite[p.\ 2]{blomer} implies that Theorem \ref{UNIV3} holds for all compact sets $K\subset D=\big\{s \in \C: \frac12<\Re(s)<1 \big\}$ with connected complement.}

\begin{thm}\label{UNIV3}  
Let $n \geq 2$. Then, for almost all $L \in X_n$ and for any $\ve>0$, any compact set $K \subset \big\{s \in \C: \frac34<\Re(s)<1 \big\}$ with connected complement, and any function $f$ that is continuous on $K$ and analytic in the interior of $K$, we have 
\begin{align*}
\liminf_{T \to \infty} \frac 1 T \mathop{\rm meas} \left \{t \in [0,T]:\max_{s \in
K} \left|E_n\left(L,\frac {n(s+it)}2\right)-f(s)\right|<\varepsilon \right \}>0. 
\end{align*}
\end{thm}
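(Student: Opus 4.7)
The proof follows the classical Bagchi--Voronin framework, adapted to the lattice-fixed setting of a zeta function without Euler product.

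I would begin by restricting attention to a $\mu_n$-full-measure subset $X_n^* \subset X_n$ consisting of lattices $L$ satisfying two generic conditions:
(i) the logarithms of the distinct values of $|\vecm|^2$, as $\vecm$ ranges over representatives of $(L \setminus \{\vec0\})/\{\pm 1\}$, are linearly independent over $\Q$; and
(ii) the critical-line bound $|E_n(L, \tfrac{n}{4} + it)| \ll_L (1+|t|)^{n/4 - \delta}$ holds for some $\delta = \delta(L) > 0$.
Condition (i) is almost sure because any fixed nontrivial $\Q$-linear relation among the $\log|\vecm|^2$ cuts out a positive-codimension subvariety of $X_n$. Condition (ii) is the almost-sure subconvex bound promised in the abstract; for $n \geq 5$ it follows from the strong uniform estimate on the error in the generalized circle problem, and for $n \in \{2,3,4\}$ the direct diagonal bound $\sum_k r_L(k)^2 k^{-n\sigma} < \infty$ (valid for $\sigma > 1 - 1/n$, hence throughout $\tfrac34 < \sigma < 1$ when $n \leq 4$) substitutes for (ii) in what follows.

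From (ii) combined with Phragmén--Lindelöf interpolation between the critical line and the abscissa of absolute convergence, I would next derive the mean-square bound
\begin{equation*}
\limsup_{T \to \infty}\frac{1}{T}\int_0^T \left|E_n\!\left(L,\tfrac{n(\sigma+it)}{2}\right)\right|^2 dt < \infty \qquad (L \in X_n^*,\ \tfrac34 < \sigma < 1).
\end{equation*}
This estimate is the technical heart of the proof; the cutoff $\sigma > 3/4$ is exactly where interpolation from the subconvex critical-line bound ceases to be effective. With this estimate in hand, a standard truncation argument then shows that $t \mapsto E_n\!\left(L, \tfrac{n(s+it)}{2}\right)$ can be approximated, uniformly on compact subsets of $\{\tfrac34 < \Re(s) < 1\}$ and in $L^2$-average over $t \in [0,T]$, by the partial sum $S_N(L,s+it) = \sum_{0<|\vecm|^2 \leq N} |\vecm|^{-n(s+it)}$, with approximation error vanishing as $N \to \infty$ uniformly in $T$.

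With the truncation step in place, the remaining structure follows the Bagchi blueprint. Using (i) and the Kronecker--Weyl equidistribution theorem, the distribution on the space of analytic functions on a neighbourhood of $K$ of $t \mapsto S_N(L, s+it)$, with $t$ uniform in $[0,T]$, converges as $T \to \infty$ to the law of the random polynomial $\sum e^{i\theta_\vecm} |\vecm|^{-ns}$ with independent uniform phases $\theta_\vecm$, one per $\pm$-orbit. Letting $N \to \infty$ while using the mean-square bound to control tails gives a limit measure $\nu_L$ on the function space. The crucial final step is to identify the support of $\nu_L$: because $E_n(L,s)$ has no Euler product, no nonvanishing condition appears, and a Pechersky/Bagchi-style rearrangement argument combined with Mergelyan's theorem (applicable because $K$ has connected complement) shows that $\mathrm{supp}(\nu_L)$ equals the entire space of functions continuous on $K$ and analytic in its interior. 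Since the target $f$ belongs to this space, the approximation event has positive $\nu_L$-mass, and by weak convergence it has positive $\liminf$-density in $t$.

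The main obstacle is the almost-sure subconvex bound (ii), a genuinely deep input from lattice point counting that governs how far into the critical strip the argument reaches. The stated range $\tfrac34 < \Re(s) < 1$ is dictated precisely by its strength after Phragmén--Lindelöf interpolation; sharper critical-line bounds, such as those implicit in Blomer's improvement noted in the footnote, would correspondingly extend the conclusion toward the full right half of the critical strip.
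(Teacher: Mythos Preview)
Your overall architecture---mean-square estimate, linear independence of log-frequencies, Bagchi-type limit law, support identification via Mergelyan---matches the paper's strategy, but two central steps fail as written.

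First, condition (i) is never satisfied, for any lattice: if $\vecv\in L$ is primitive then $|\vecv|^2$, $|2\vecv|^2=4|\vecv|^2$ and $|4\vecv|^2=16|\vecv|^2$ are distinct values of $|\vecm|^2$ whose logarithms obey the nontrivial rational relation $\log|4\vecv|^2-2\log|2\vecv|^2+\log|\vecv|^2=0$. The paper repairs this by restricting to \emph{primitive} vectors (for which the $\log\widehat{\mathcal V}_j(L)$ are almost surely $\Q$-independent), writing $E_n(L,\tfrac{ns}{2})=2^{1-s}\zeta(ns)V_n^s\,\widehat{\mathcal E}_n(L,s)$, and then handling the extra factors $\zeta(ns)$ and $(V_n/2)^s$ under the shift $s\mapsto s+it$ through an auxiliary decomposition $\widehat{\mathcal E}_n=A-B-C$ (with $A$ built to have $\Q$-independent log-frequencies) together with simultaneous Diophantine conditions on $t$. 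This is a genuine additional layer your outline omits.

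Second, the mean-square bound does not follow from subconvexity via Phragm\'en--Lindel\"of. Even with the paper's strong critical-line bound $E_n(L,\tfrac n4+it)\ll_{L,\ve}(1+|t|)^{1+\ve}$---far sharper than the $(1+|t|)^{n/4-\delta}$ you invoke---interpolation to $\sigma=\tfrac34$ yields only a pointwise estimate $(1+|t|)^{1/2+\ve}$, whose trivial square-integral is $T^{2+\ve}$, not $O(T)$; and the diagonal sum alone cannot control the off-diagonal contribution when the frequencies $\log|\vecm|^n$ may cluster. The paper instead proves $\int_1^T|E_n(L,\tfrac{n(\sigma+it)}{2})|^2\,dt\ll_{L,\sigma}T$ for $\sigma>\tfrac34$ directly: it truncates via an approximate functional equation (itself a consequence of the circle-problem error bound), applies Gallagher's large-sieve lemma, and feeds in a delicate estimate for short increments $\Delta(x+\delta)-\Delta(x)$. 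The subconvex bound is then a \emph{corollary} of that approximate functional equation, not an input. With the mean square established, the paper appeals to the packaged Drungilas--Garunk\v{s}tis--Ka\v{c}\.{e}nas universality criterion for general Dirichlet series rather than redoing the Bagchi construction from scratch, but that last difference is cosmetic.
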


There are a number of standard corollaries of this type of universality (see, e.g., \cite{steuding}). As one such example, we mention an immediate consequence of Theorem \ref{UNIV3} that follows by approximating $f(s)=s-\frac{a+b}2$ on $K=\left\{s \in\C : |s-\frac{a+b}2| \leq \frac{b-a}2\right\}$ and using Rouche's theorem.

\begin{cor}\label{ZEROS2}
Let $n\geq 2$ and let $\frac34<a<b<1$. Then, for almost all $L \in X_n$, we have
\begin{gather*}
\liminf_{T \to \infty} \frac 1 T \# \left \{ \rho \in \C: |\Im(\rho)|<T, a<\Re(\rho)<b, E_n\left(L,\frac{n\rho}{2} \right)=0 \right \}>0.
\end{gather*}
\end{cor}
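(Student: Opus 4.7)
The plan is to apply Theorem~\ref{UNIV3} with the target function $f(s)=s-\frac{a+b}{2}$ on the closed disk
\begin{align*}
K:=\bigl\{s\in\C : |s-\sfrac{a+b}{2}|\leq\sfrac{b-a}{2}\bigr\},
\end{align*}
and then to invoke Rouche's theorem to convert uniform approximation into the existence of zeros. Since $\frac34<a<b<1$, the disk $K$ lies inside $\{s\in\C:\frac34<\Re(s)<1\}$ and trivially has connected complement; $f$ is entire and has a single simple zero at $\frac{a+b}{2}\in\operatorname{int}(K)$, with $|f(s)|=\frac{b-a}{2}$ on $\partial K$. So $K$ and $f$ satisfy the hypotheses of Theorem~\ref{UNIV3}.

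I would fix any $\varepsilon\in(0,\frac{b-a}{2})$. Then Theorem~\ref{UNIV3} produces, for almost every $L\in X_n$, a constant $c=c(L,\varepsilon)>0$ such that the set
\begin{align*}
A_T:=\Bigl\{t\in[0,T]:\max_{s\in K}\bigl|E_n\bigl(L,\sfrac{n(s+it)}{2}\bigr)-f(s)\bigr|<\varepsilon\Bigr\}
\end{align*}
has Lebesgue measure at least $cT$ for all sufficiently large $T$. For every such $t$, the bound $|E_n(L,\frac{n(s+it)}{2})-f(s)|<\varepsilon<|f(s)|$ holds on $\partial K$, so by Rouche's theorem $s\mapsto E_n(L,\frac{n(s+it)}{2})$ has exactly one zero $s_0(t)$ in $\operatorname{int}(K)$. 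Setting $\rho(t):=s_0(t)+it$ gives a zero of $\rho\mapsto E_n(L,\frac{n\rho}{2})$ satisfying $a<\Re(\rho(t))<b$ and $|\Im(\rho(t))-t|<\frac{b-a}{2}$.

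The last task is to pass from \emph{one zero per good $t$} to a lower bound on the number of \emph{distinct} zeros, which is the one mildly delicate step. For any fixed zero $\rho$ of $\rho\mapsto E_n(L,\frac{n\rho}{2})$, the set $\{t\in A_T:\rho(t)=\rho\}$ is contained in an interval of length $b-a$ centred at $\Im(\rho)$. Hence the number of distinct zeros with $a<\Re(\rho)<b$ and $|\Im(\rho)|<T+\frac{b-a}{2}$ is at least $\operatorname{meas}(A_T)/(b-a)\geq cT/(b-a)$. Dividing by $T$, absorbing the harmless $O(1)$ shift in the imaginary part into the $\liminf$, and noting that $c/(b-a)>0$ completes the argument. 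Beyond this pigeonhole-style bookkeeping there is no substantive obstacle: the entire difficulty of the corollary is already packed into Theorem~\ref{UNIV3}.
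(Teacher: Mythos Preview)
Your argument is correct and follows exactly the route the paper indicates: approximate $f(s)=s-\frac{a+b}{2}$ on the disk $K=\{s:|s-\frac{a+b}{2}|\le\frac{b-a}{2}\}$ via Theorem~\ref{UNIV3} and apply Rouch\'e's theorem. The pigeonhole step you spell out---bounding the measure of $t$'s producing a given zero by $b-a$---is the natural way to turn positive-density approximation into a positive lower density of zeros, and is precisely what the paper leaves implicit.
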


This corollary complements the recent results in \cite{SodStrom}, that treats zeros in the half-plane $\Re(s)>1$, by showing that also the zeros in the strip $\{s: \frac 34<\Re(s)<1\}$ violates the Riemann hypothesis in a strong way. We also note that the corresponding result for $c$-values (values $s$ such that $E_n\left(L,\frac{ns}{2}\right)=c$) follows by a similar argument.

Finally, we mention that the proof of Theorem \ref{UNIV3} uses a result of Drungilas-Garunk{\v{s}}tis-Ka{\v{c}}{\.{e}}nas \cite{DruGarKac} on universality of general Dirichlet series. The main ingredient in the proof is a new mean square estimate that is valid for $\Re(s)>\frac34$ and  almost all lattices $L \in X_n$ (cf.\ Theorem \ref{MEANSQUARE}). In the process of proving this estimate, we discover a strong bound on the growth of the Epstein zeta function on the critical line, which we find interesting in its own right.

\begin{thm}\label{SUBCONVEXITYTHEOREM}
Let $n\geq2$ and let $\ve>0$. Then, for almost all $L\in X_n$, we have 
\begin{equation}\label{SUBCONVEX}
E_n\left(L,\frac n4+it\right)=O_{L,\ve}\left((1+|t|)^{1+\ve}\right).
\end{equation}
\end{thm}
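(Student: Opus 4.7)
I would combine a partial-summation representation of $E_n(L,s)$ in terms of the lattice-point counting function $N_L(R):=\#\{\vecm\in L\setminus\{\vec0\}:|\vecm|\le R\}$ with the strong almost-sure pointwise bound on the error $E_L(R):=N_L(R)-V_nR^n$ developed in Section \ref{Poissonsection}. Starting from the Stieltjes integral $E_n(L,s)=\int_{0^+}^{\infty}R^{-2s}\,dN_L(R)$ and integrating by parts (valid for $\Re(s)>n/2$), followed by splitting off the vectors of length at most $A$ and substituting $N_L(R)=V_nR^n+E_L(R)$ inside the tail integral, one arrives at the identity
\begin{equation*}
E_n(L,s)=\sum_{0<|\vecm|\le A}|\vecm|^{-2s}+\frac{nV_nA^{n-2s}}{2s-n}-E_L(A)A^{-2s}+2s\int_A^{\infty}E_L(R)R^{-2s-1}\,dR,
\end{equation*}
initially valid for $\Re(s)>n/2$ and extending by analytic continuation to $\Re(s)>\alpha/2$ whenever $|E_L(R)|\le C_LR^{\alpha}$.

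Next I would specialize to $s=n/4+it$ with $A=1$. The finite short-vector sum, the explicit fraction, and the boundary term $E_L(1)$ are each trivially $O_L(1)$ uniformly in $t$. The essential input is the almost-sure bound from Section \ref{Poissonsection} (cf.\ \cite{epstein2,schmidt}): for almost all $L\in X_n$, there exist an exponent $\alpha<n/2$ and a constant $C_L>0$ with $|E_L(R)|\le C_LR^{\alpha}$ for all $R\ge1$. With this in hand,
\begin{equation*}
\left|\int_1^{\infty}E_L(R)R^{-n/2-1-2it}\,dR\right|\le C_L\int_1^{\infty}R^{\alpha-n/2-1}\,dR=O_L(1),
\end{equation*}
uniformly in $t$, since the integrand is $O(R^{\alpha-n/2-1})$ with $\alpha-n/2-1<-1$. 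The prefactor $|2s|$ then contributes a factor of $O(1+|t|)$, giving altogether $|E_n(L,n/4+it)|=O_L(1+|t|)$, which is slightly stronger than the asserted $O_{L,\varepsilon}((1+|t|)^{1+\varepsilon})$.

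The main obstacle sits not in the complex-analytic reduction above, which is essentially routine once a bound of the form $|E_L(R)|=O_L(R^{n/2-\delta})$ is available, but in establishing that almost-sure pointwise bound on the circle-problem error in the first place. That bound is precisely the generalized circle problem estimate worked out in Section \ref{Poissonsection}, and it is where the real mathematical work sits; controlling $E_L(R)$ uniformly in $R$ on a subset of $X_n$ of full $\mu_n$-measure is a genuine probabilistic/analytic task. The only remaining bookkeeping --- justifying the analytic continuation past the pole at $s=n/2$, handling lattices with accidentally very short vectors so that $\sum_{0<|\vecm|\le 1}|\vecm|^{-n/2}$ is finite on a set of full measure, and verifying that no extraneous $|t|^\delta$ factor sneaks into the final estimate --- is routine.
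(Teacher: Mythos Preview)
Your partial-summation identity is correct and the overall shape of the argument is the right idea, but there is a genuine gap in the key input. You claim that Section~\ref{Poissonsection} supplies an exponent $\alpha<n/2$ with $|E_L(R)|\le C_LR^{\alpha}$ for almost all $L$; it does not. Theorem~\ref{Rnthm} (and Remark~\ref{SCHMIDTREMARK} for $n=2$) gives $|R_{n,L}(t)|\ll_{L,\ve} t^{1/2}(\log t)^{3/2+\ve}$, and after unwinding the change of variable $t=\tfrac12 V_n R^n$ this becomes $|E_L(R)|\ll_{L,\ve} R^{n/2}(\log R)^{3/2+\ve}$: the exponent is \emph{exactly} $n/2$, not strictly below it. With the correct bound your tail integral on the critical line satisfies
\[
\int_1^{\infty}|E_L(R)|\,R^{-n/2-1}\,dR \;\gg\; \int_1^{\infty} R^{-1}\,dR \;=\;\infty,
\]
so the claimed $O_L(1)$ estimate for the integral fails, and with it your $O_L(1+|t|)$ conclusion. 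That your conclusion came out strictly sharper than the theorem's $(1+|t|)^{1+\ve}$ should have been a warning sign: the $\ve$ is present precisely because the available circle-problem exponent sits right at $n/2$.

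The paper's proof sidesteps this obstruction by not working on the critical line directly. Lemma~\ref{LEMMAAPPROXFUNC} --- which is essentially your partial-summation identity transported to the normalized variable --- yields $\mathcal{E}_n(L,s)=O_{L,\delta}(|s|)$ on every line $\Re(s)=\tfrac12+\delta$ with $\delta>0$ fixed; the functional equation~\eqref{FUNCTIONALEQ} transfers this (at the cost of a Gamma-ratio of size $|t|^{O(\delta)}$) to the reflected line, and the Phragm\'en--Lindel\"of principle interpolates to the critical line, producing $(1+|t|)^{1+O(\delta)}$. Letting $\delta\to 0$ gives the stated $\ve$-bound. Your argument can be repaired in exactly this way --- apply your identity on $\Re(s)=\tfrac{n}{4}+\eta$ where the integral does converge absolutely, then invoke the functional equation and Phragm\'en--Lindel\"of --- but at that point it coincides with the paper's proof.
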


Recalling that the corresponding convexity bound states that $E_n\left(L,\frac n4+it\right)=O_{L,\ve}((1+|t|)^{\frac n4+\ve})$ for all $L\in X_n$, it follows that \eqref{SUBCONVEX} provides a subconvex estimate for almost all $L\in X_n$ as soon as $n\geq 5$. In fact, the bound \eqref{SUBCONVEX} is majorized by any positive power $\delta$ of the convexity bound, in all sufficiently large dimensions $n$ (depending on $\delta$).

Theorem \ref{SUBCONVEXITYTHEOREM} has recently been improved by Blomer \cite{blomer}; using the spectral theory of automorphic forms, Blomer arrives at the estimate 
\begin{align*}
E_n\left(L,\frac n4+it\right)=O_{L,\ve}\big((1+|t|)^{\frac12+\ve}\big)
\end{align*}
for almost all $L\in X_n$. (A detailed proof of this result is provided only for almost all orthogonal lattices). See also \cite{spinu} for the even stronger bound $E_2\left(L,\frac 12+it\right)=O_{L,\ve}((1+|t|)^{\frac14+\ve})$ valid for almost all $L\in X_2$.

\section{Preliminaries}\label{prelimsec}

\subsection{Poisson distribution of vector lengths}\label{Poissonsection} 

Recall that we use $V_n$ to denote the volume of the unit ball in $\R^n$. Given a lattice $L\in X_n$, we order its nonzero vectors by increasing lengths as $\pm\vecv_1,\pm\vecv_2,\pm\vecv_3,\ldots$, set $\ell_j=|\vecv_j|$ (thus $0<\ell_1\leq \ell_2\leq \ell_3\leq\ldots$), and define
\begin{align}\label{volumes}
 \mathcal V_j(L):=\tfrac12 V_n\ell_j^n\,,
\end{align}
so that $\mathcal V_j(L)$ equals one half of the volume of an $n$-dimensional ball of radius $\ell_j$. The main result in \cite{poisson} states that the volumes $\{\mathcal V_j(L)\}_{j=1}^{\infty}$ determined by a random lattice $L\in X_n$ converges in distribution, as $n\to\infty$, to the points of a Poisson process on the positive real line with constant intensity $1$. In other words, if we for $t\geq0$ let
\begin{align*}
N_{n,L}(t):=\#\left\{j:\mathcal{V}_j(L)\leq t\right\}, 
\end{align*}
then we have the following theorem.

\begin{thm}\label{POISSON}
Let $\mathcal P=\{\mathcal N(t),t\geq0\}$ be a Poisson process on the positive real line with intensity $1$. Then the stochastic process $\{N_{n,L}(t),t\geq0\}$ converges weakly to $\mathcal P$ as $n\to\infty$.
\end{thm}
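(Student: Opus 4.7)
My plan is to derive the weak convergence of the point process $\{N_{n,L}(t)\}_{t\geq 0}$ to the intensity-$1$ Poisson process $\mathcal P$ from convergence of its joint factorial moments, invoking Kallenberg's theorem for simple point processes (for which convergence of finite-dimensional distributions is equivalent to weak convergence in the vague topology of counting measures). The task thereby reduces to showing that for any finite collection of disjoint bounded intervals $A_1,\ldots,A_r\subset(0,\infty)$ and any nonnegative integers $k_1,\ldots,k_r$,
$$\int_{X_n}\prod_{i=1}^r\bigl(N_{n,L}(A_i)\bigr)_{k_i}\,d\mu_n(L)\;\longrightarrow\;\prod_{i=1}^r|A_i|^{k_i}\qquad(n\to\infty),$$
where $(x)_k:=x(x-1)\cdots(x-k+1)$ and $|\cdot|$ denotes Lebesgue measure. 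The change of variables $\vecv\mapsto\tfrac12 V_n|\vecv|^n$ identifies each $A_i$ with a spherical shell $\tilde A_i\subset\R^n$ of Lebesgue measure $2|A_i|$, and thanks to the antipodal symmetry $\vecv\mapsto-\vecv$ we have $N_{n,L}(A_i)=\tfrac12\#(L\cap\tilde A_i)$.

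The first moment is exact: Siegel's mean value formula gives $\int_{X_n}\#(L\cap\tilde A_i)\,d\mu_n(L)=2|A_i|$ for every $n\geq2$, so the intensity identity $\mathbb E[N_{n,L}(A_i)]=|A_i|$ holds on the nose. For the higher factorial moments I would expand the left-hand side above into sums of the form
\begin{align*}
2^{-k}\int_{X_n}\sum_{(\vecm_1,\ldots,\vecm_k)}{}'\prod_{j=1}^k\mathbf{1}_{\tilde A_{\sigma(j)}}(\vecm_j)\,d\mu_n(L),
\end{align*}
where $k=\sum_i k_i$, the prime indicates summation over ordered tuples of nonzero lattice vectors that are pairwise distinct and pairwise non-antipodal (so that they represent $k$ distinct $\pm$-pairs), and $\sigma:\{1,\ldots,k\}\to\{1,\ldots,r\}$ records the interval membership of each factor. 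Evaluating these integrals using Rogers' generalization of Siegel's formula, valid for $k<n$, produces a main ``statistically independent'' term $\prod_i(2|A_i|)^{k_i}$, which after the prefactor $2^{-k}$ becomes precisely $\prod_i|A_i|^{k_i}$, together with correction terms indexed by the nontrivial $\Z$-linear dependency patterns among the $\vecm_j$.

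The heart of the argument --- and the step I expect to be most delicate --- is proving that every Rogers correction term vanishes in the large-$n$ limit. Each such term can be reorganized, via M\"obius inversion onto primitive vectors, as a finite product of volume integrals against sublattices of rank $d<k$, weighted by divisor-type sums over the integer multipliers relating the collinear $\vecm_j$. Since the unit-ball volume $V_n$ decays superexponentially as $n\to\infty$, and since the divisor-type weights arising from non-primitive configurations are controlled by tails like $\sum_{j\geq 2}j^{-n}=O(2^{-n})$, every proper partition contributes a quantity tending to $0$ uniformly across the fixed bounded windows $A_i$. Combining these vanishing correction estimates with the exact first-moment identity yields the factorial-moment convergence above for every $r$ and every choice of $(k_i)$, which by Kallenberg's theorem gives the desired weak convergence of the counting processes to $\mathcal P$.
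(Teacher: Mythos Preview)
The paper does not supply a proof of this theorem; it is quoted verbatim as the main result of \cite{poisson}. Your outline---computing the joint factorial moments of the counts $N_{n,L}(A_i)$ via Rogers' extension of Siegel's mean value formula, identifying the full-rank term with the Poisson factorial moments $\prod_i|A_i|^{k_i}$, and showing that the lower-rank correction terms in Rogers' formula vanish as $n\to\infty$---is precisely the strategy carried out in that reference, so your approach agrees with the original source.

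One minor comment on your heuristic for the vanishing of the correction terms: the decay is not really driven by the superexponential decay of $V_n$, since the normalization $\mathcal V_j=\tfrac12 V_n\ell_j^n$ has already absorbed that factor and the shells $\tilde A_i\subset\R^n$ have Lebesgue measure $2|A_i|$ independent of $n$. The genuine mechanism is the $n$-th power scaling of volumes under dilation: a collinear relation $\vecm'=q\vecm$ with $|q|\geq 2$ forces $\mathcal V(\vecm')\geq 2^n\mathcal V(\vecm)$, so such a pair cannot land simultaneously in fixed bounded windows once $n$ is large. This is exactly the $O(2^{-n})$ estimate you already mention; the analogous (but more involved) bookkeeping handles the general lower-rank terms in Rogers' formula.
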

 
Given $L\in X_n$ and $t\geq0$, we define 
\begin{equation}\label{Rndef}
R_{n,L}(t):=N_{n,L}(t)-t.
\end{equation} 
Note that $1+2R_{n,L}(t/2)$ equals the error term in the circle problem generalized to an $n$-dimensional ball of volume $t$ and a general lattice $L\in X_n$. We recall the following bound on $R_{n,L}(t)$ and refer to \cite[Theorem 1.3]{epstein2} (see also \cite{schmidt}) for a proof.

\begin{thm}\label{Rnthm}
For all $\ve>0$ there exists $C_{\ve}>0$ such that for all $n\geq3$ and $C\geq1$ we have 
\begin{align}\label{RnEST}
{\rm Prob}_{\mu_n}\Big\{L\in X_n : |R_{n,L}(t)|\leq C_{\ve}(Ct)^{\frac{1}{2}}(\log t)^{\frac{3}{2}+\ve},\quad \forall t\geq5\Big\}\geq1-C^{-1}.
\end{align}
\end{thm}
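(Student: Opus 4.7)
The plan is to combine a Rogers-type second-moment estimate with a Gál--Koksma maximal inequality and the monotonicity of $N_{n,L}$.

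First, I would use Rogers' mean-value formula for second moments to bound $\mathbb{E}_{\mu_n}[R_{n,L}(t)^2]$ uniformly in $n\geq 3$. Applied to the characteristic function of a Euclidean ball of volume $2t$, Rogers' formula produces a main term $(V_n r(t)^n)^2=4t^2$, a diagonal contribution of size $O(t)$ from coincident and antipodal lattice vectors, and a sum of ``non-primitive'' corrections involving $\max(|p|,|q|)^{-n}\cdot 2t$ over coprime pairs $(p,q)$ with $\max>1$; the latter series converges for $n\geq 3$ and is dominated by its value at $n=3$, so that
\begin{equation*}
\mathbb{E}_{\mu_n}[R_{n,L}(t)^2]\leq c_0\,t
\end{equation*}
with an absolute constant $c_0$. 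The same reasoning applied to the annuli $B_{r(b)}\setminus B_{r(a)}$ yields the increment estimate $\mathbb{E}_{\mu_n}[(R_{n,L}(b)-R_{n,L}(a))^2]\leq c_0(b-a)$ for $0\leq a\leq b$.

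With the increment variances under control, I would invoke the Gál--Koksma maximal inequality for the discrete process $\{R_{n,L}(j)\}_{j\in\Z^+}$:
\begin{equation*}
\int_{X_n}\max_{1\leq j\leq N}R_{n,L}(j)^2\,d\mu_n(L)\leq c_1\,N\,(\log N)^2.
\end{equation*}
Chebyshev's inequality applied at the dyadic cutoffs $N=2^K$ then produces
\begin{equation*}
\mathrm{Prob}_{\mu_n}\!\Bigl\{\max_{j\leq 2^K}|R_{n,L}(j)|>(C\cdot 2^K)^{1/2}\,K^{3/2+\ve}\Bigr\}\leq \frac{c_1}{C\,K^{1+2\ve}},
\end{equation*}
and summing in $K\geq 1$ bounds the total failure probability by $c_\ve/C$. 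The exponent $3/2+\ve$ is precisely what is forced by the $(\log N)^2$ loss in Gál--Koksma together with the summability requirement on the dyadic union bound.

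Finally, the monotonicity of $N_{n,L}$ gives, for $t\in[j,j+1]$, the sandwich $R_{n,L}(j)-1\leq R_{n,L}(t)\leq R_{n,L}(j+1)+1$, hence $|R_{n,L}(t)|\leq\max(|R_{n,L}(j)|,|R_{n,L}(j+1)|)+1$, and the additive $+1$ is absorbed into $C_\ve$. The main obstacle is verifying that the Rogers constant $c_0$ can be chosen \emph{independent} of $n\geq 3$: one must show that the non-primitive tail in the second-moment identity is bounded absolutely. Once this uniformity is established, the subsequent Gál--Koksma step and dyadic Markov argument are standard, and the $(\log t)^{3/2+\ve}$ factor emerges naturally from the chaining.
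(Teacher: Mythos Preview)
The paper does not actually prove this theorem; it simply cites \cite[Theorem 1.3]{epstein2} (and \cite{schmidt}) for the proof. Your sketch is a correct outline and is, in fact, essentially the argument carried out in those references: Rogers' second-moment formula gives the uniform variance bound $\mathbb{E}_{\mu_n}[R_{n,L}(t)^2]\ll t$ (with the uniformity in $n\geq 3$ coming from the convergence of the non-primitive tail, exactly as you flag), a Rademacher--Menshov/G\'al--Koksma maximal inequality converts the increment bounds into a maximal $L^2$ estimate with a $(\log N)^2$ loss, and a dyadic Chebyshev/Borel--Cantelli argument then produces the $(\log t)^{3/2+\ve}$ exponent and the $1-C^{-1}$ probability (after absorbing the accumulated constant $c_\ve$ into $C_\ve$). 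The passage from integer $t$ to all real $t\geq 5$ via the monotonicity sandwich is also standard.

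One small point worth making explicit in your write-up: after the dyadic union bound you obtain a failure probability of the form $c_\ve/C$ rather than $1/C$; you should state that enlarging the threshold by the factor $C_\ve$ divides this by $C_\ve^2$, so that choosing $C_\ve^2\geq c_\ve$ yields the claimed $1-C^{-1}$. Similarly, when descending from the dyadic maximal bound at level $2^K$ to the pointwise bound $C_\ve(Ct)^{1/2}(\log t)^{3/2+\ve}$ for $t\in[2^{K-1},2^K]$, a further absolute constant is absorbed into $C_\ve$. These are routine, but since the statement has a clean $1-C^{-1}$ on the right, it is worth saying where the slack goes.
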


In our discussion, we will often find it convenient to work with a close relative of $R_{n,L}(t)$, namely
\begin{equation}\label{Sn}
S_{n,L}(t):=N_{n,L}(t)-\#\left\{j\in\Z^+ : j\leq t\right\}\qquad\,\, (t\geq0).
\end{equation}
The following estimate is an immediate corollary of Theorem \ref{Rnthm}. 

\begin{cor}\label{SnCOR}
For all $\ve>0$ there exists $C_{\ve}>0$ such that for all $n\geq3$ and $C\geq1$ we have 
\begin{align*}
{\rm Prob}_{\mu_n}\Big\{L\in X_n : |S_{n,L}(t)|\leq C_{\ve}(Ct)^{\frac{1}{2}}(\log t)^{\frac{3}{2}+\ve},\quad \forall t\geq5\Big\}\geq1-C^{-1}.
\end{align*}
\end{cor}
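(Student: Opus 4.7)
The plan is to deduce Corollary \ref{SnCOR} directly from Theorem \ref{Rnthm} by showing that $S_{n,L}(t)$ and $R_{n,L}(t)$ differ by at most $1$, and then absorbing this additive discrepancy into the constant $C_{\ve}$.

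First, I would observe that $\#\{j\in\Z^+ : j\leq t\}=\lfloor t\rfloor$, so from the definitions \eqref{Rndef} and \eqref{Sn} we get
\begin{align*}
S_{n,L}(t)-R_{n,L}(t)=t-\lfloor t\rfloor=\{t\}\in[0,1).
\end{align*}
In particular $|S_{n,L}(t)|\leq |R_{n,L}(t)|+1$ for every $t\geq 0$ and every $L\in X_n$, independently of $n$.

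Next, given $\ve>0$, I would invoke Theorem \ref{Rnthm} to obtain a constant $C_{\ve}'>0$ such that, with probability at least $1-C^{-1}$, one has $|R_{n,L}(t)|\leq C_{\ve}'(Ct)^{1/2}(\log t)^{3/2+\ve}$ for all $t\geq 5$. For $t\geq 5$ and $C\geq 1$ the quantity $(Ct)^{1/2}(\log t)^{3/2+\ve}$ is bounded below by the positive constant $\kappa_{\ve}:=5^{1/2}(\log 5)^{3/2+\ve}$, so
\begin{align*}
|S_{n,L}(t)|\leq C_{\ve}'(Ct)^{1/2}(\log t)^{3/2+\ve}+1\leq \Bigl(C_{\ve}'+\kappa_{\ve}^{-1}\Bigr)(Ct)^{1/2}(\log t)^{3/2+\ve}
\end{align*}
on the same event. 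Setting $C_{\ve}:=C_{\ve}'+\kappa_{\ve}^{-1}$ then yields the desired probability bound.

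There is no real obstacle here; the only thing to be careful about is that the event in Theorem \ref{Rnthm} holds simultaneously for all $t\geq 5$, so the same event gives the uniform bound for $S_{n,L}(t)$, and the probability estimate $1-C^{-1}$ is preserved verbatim.
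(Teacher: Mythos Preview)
Your argument is correct and matches the paper's approach: the paper simply states that the corollary is immediate from Theorem \ref{Rnthm}, and your computation $S_{n,L}(t)-R_{n,L}(t)=t-\lfloor t\rfloor\in[0,1)$ together with the absorption of the additive $1$ into $C_{\ve}$ is precisely the intended one-line deduction.
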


\begin{remark}\label{SCHMIDTREMARK}
Let us note that in the case $n=2$, Schmidt \cite[Theorem 2]{schmidt} has proved that for almost all $L\in X_2$ and for all sufficiently large $t$ (depending on $L$ and $\ve$), we have
\begin{equation*}
|R_{2,L}(t)|\ll_{L,\ve}t^{\frac{1}{2}}(\log t)^{\frac{5}{2}+\ve}.
\end{equation*}
Hence, for the same $L$ and $t$, we also have
\begin{equation*}
|S_{2,L}(t)|\ll_{L,\ve}t^{\frac{1}{2}}(\log t)^{\frac{5}{2}+\ve}.
\end{equation*}
\end{remark}

\subsection{Normalization of $E_n(L,s)$}

For any complex number $s$ with $\Re(s)>1$, we can use \eqref{volumes} to write $E_n\left(L,\frac{ns}{2}\right)$ in the form
\begin{align*}
E_{n}\left(L,\sfrac{ns}{2}\right)=2^{1-s}V_n^{s}\sum_{j=1}^{\infty}\mathcal{V}_j(L)^{-s}.
\end{align*}
We find it natural to consider the normalized function
\begin{align*}
\mathcal{E}_{n}(L,s):=2^{s-1}V_n^{-s}E_{n}\left(L,\sfrac {ns}{2} \right),
\end{align*}
so that 
\begin{align*}
\mathcal{E}_{n}(L,s)=\sum_{j=1}^{\infty}\mathcal V_j(L)^{-s}\qquad\,\,(\Re(s)>1).
\end{align*}
Note that $\mathcal{E}_{n}(L,s)$ has a simple pole at $s=1$ with residue $1$. Note also that if $\{T_j\}_{j=1}^{\infty}$ are the points of a Poisson process on the positive real line with intensity $1$, then \cite[Theorem 1 and Remark 4]{epstein1} state that $\mathcal{E}_{n}(L,s)$ converges in distribution to $\sum_{j=1}^{\infty}T_j^{-s}$, for any fixed $s$ with $\Re(s)>1$, as $n\to\infty$.

\section{Proof of Theorem \ref{UNIV1}}\label{Proof1}

To begin with we state the following lemma of Mishou and Nagoshi (cf.\ \cite[Proposition 2.4]{MiNa1} for a more general statement). We recall that $D$ denotes the right half of the critical strip (see \eqref{Ddef}).

\begin{lem}\label{APPROX1}
Let $\Omega$ be a simply connected region in $D$ that is symmetric with respect to the real axis. Then, for any $\ve>0$, any compact set $K\subset\Omega$ and any holomorphic function $f$ on $\Omega$ that is real-valued on $\Omega\cap\R$, there exist $N_0\in\Z^+$ and numbers $a_j\in\{-1,0,1\}$, $1\leq j\leq N_0$, such that
\begin{align*}
\max_{s\in K}\Bigg|\sum_{j=1}^{N_0}a_jj^{-s}-f(s)\Bigg|<\ve.
\end{align*}
\end{lem}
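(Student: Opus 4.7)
The lemma is a standard density statement for Dirichlet polynomials with constrained coefficients, and my plan proceeds in two steps.

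First, I would establish unrestricted real-coefficient density: the $\mathbb{R}$-linear span of $\{j^{-s} : j \geq 1\}$ is uniformly dense on compact subsets of $\Omega$ in the space $\mathcal{A}_{\mathbb{R}}(\Omega)$ of holomorphic functions on $\Omega$ that are real-valued on $\Omega \cap \mathbb{R}$. This is a classical Hahn--Banach argument: any continuous linear functional on $\mathcal{A}_{\mathbb{R}}(\Omega)$ is represented by integration against a compactly supported complex measure $\mu$ on $\Omega$; if it annihilates every $j^{-s}$, then $F(w) := \int e^{-sw}\, d\mu(s)$ is entire of exponential type and vanishes at $w = \log j$ for every $j \geq 1$. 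A Carlson-type theorem, exploiting that the zero set $\{\log j\}$ has sufficient density, then forces $F \equiv 0$ and hence $\mu = 0$. The symmetry of $\Omega$ under complex conjugation, combined with Schwarz reflection, lets us restrict to real-valued measures and hence to real-coefficient approximations of functions in $\mathcal{A}_{\mathbb{R}}(\Omega)$.

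Second, I would discretize from real coefficients to $\{-1, 0, 1\}$. Given a real-coefficient approximation $P(s) = \sum_{j=1}^{N} c_j j^{-s}$ within $\ve/2$ of $f$ uniformly on $K$, I would replace each term $c_j j^{-s}$ by a signed subset sum $\sum_{m \in J_j} \epsilon_m m^{-s}$ with $\epsilon_m \in \{\pm 1\}$ and $J_j$ a subset of a large integer interval $[M_j, M_j']$, chosen disjoint for distinct $j$. Since $\Re(s) < 1$ on $K$, the total reach $\sum_{m=M_j}^{M_j'} m^{-\Re(s)}$ can be made arbitrarily large while individual contributions $m^{-s}$ remain arbitrarily small, so the set of achievable signed subset sums has both large convex hull and fine granularity. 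A greedy or inductive selection of signs (and of omitted indices, which correspond to coefficient $0$) then approximates $c_j j^{-s}$ uniformly on $K$ to within $\ve/(2N)$. Summing over $j$ and relabeling indices produces the desired polynomial $\sum_{j=1}^{N_0} a_j j^{-s}$ with $a_j \in \{-1,0,1\}$ that is within $\ve$ of $f$ on $K$.

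The principal obstacle is the uniform-in-$s$ nature of the discretization in the second step: while pointwise approximation of $c_j j^{-s}$ by signed subset sums is straightforward, making it uniform over the compact set $K$ requires careful control of the interval length $M_j' - M_j$ and exploitation of the smooth dependence of $m^{-s}$ on $s$, so that the single-step discretization error of order $M_j^{-\Re(s)}$ is controlled uniformly as $s$ ranges over $K$. Provided this is carried out, the disjointness of the index ranges $[M_j, M_j']$ across $j$ ensures that the resulting polynomial genuinely has coefficients in $\{-1,0,1\}$ at every positive integer index.
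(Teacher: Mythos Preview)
The paper does not actually prove this lemma; it simply cites \cite[Proposition~2.4]{MiNa1}. So there is no in-text argument to compare against, and I will assess your outline on its own terms.

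Your Step~1 is essentially correct. One minor remark: invoking a ``Carlson-type theorem'' is not quite the right label. The zeros $\{\log j : j\ge 1\}$ are exponentially dense in $[0,T]$, while $F(w)=\int e^{-sw}\,d\mu(s)$ (with $\mu$ compactly supported in $\Omega$) is entire of finite exponential type. By Jensen's inequality such a function has at most $O(T)$ zeros in the disk of radius $T$, so having roughly $e^T$ real zeros already forces $F\equiv 0$. This is stronger and simpler than Carlson.

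Step~2, however, has a genuine gap. You propose to approximate each individual term $c_j j^{-s}$ by a signed subset sum $\sum_{m\in J_j}\epsilon_m m^{-s}$ with $J_j$ contained in an integer interval $[M_j,M_j']$ \emph{disjoint} from the ranges used for the other indices. The difficulty is not granularity but \emph{shape}: $j^{-s}=e^{-s\log j}$ is an exponential in $s$ with frequency $\log j$, while the $m^{-s}$ for $m\in[M_j,M_j']$ have frequencies $\log m$ lying in an interval disjoint from $\log j$ (unless $j\in[M_j,M_j']$, in which case the intervals for consecutive $j$ collide once the $|c_j|$ are large). No single choice of signs $(\epsilon_m)$ can make $\sum_m \epsilon_m m^{-s}$ uniformly close to $c_j j^{-s}$ on a set $K$ of positive diameter: your ``large convex hull and fine granularity'' observation is pointwise in $s$, and the greedy selection that works at one $s_0\in K$ (governed by the phases $m^{-it_0}$) will be wrong at other points of $K$. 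The acknowledged ``principal obstacle'' is therefore not merely a technicality but the crux, and the sketch does not resolve it.

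The standard route---and presumably that of Mishou--Nagoshi---avoids term-by-term discretization altogether. One works in a Bergman-type Hilbert space $H$ on a disk containing $K$ and appeals to a rearrangement theorem of Pe\v{c}erski\u{\i} type: from $\sum_j \|j^{-s}\|_H^2<\infty$ (which holds because $\Re(s)>\tfrac12$ on $K$) together with $\sum_j |\varphi(j^{-s})|=\infty$ for every nonzero $\varphi\in H^*$ (which is a quantitative strengthening of your Step~1, obtained by the same entire-function argument combined with the divergence of $\sum_j j^{-\sigma}$ for $\sigma<1$), one deduces directly that the set of finite sums $\sum_j a_j j^{-s}$ with $a_j\in\{-1,0,1\}$ is dense in $H$. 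Restricting to $K$ gives the lemma. If you wish to repair your outline, this global Hilbert-space step should replace the attempted per-term discretization.
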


Given $N\in\Z^+$, $L\in X_n$ and $s\in\C$, we define
\begin{align*}
P_N(L,s):=\sum_{\mathcal V_j(L)\leq N}\mathcal V_j(L)^{-s}-\sum_{j=1}^{N}j^{-s}+\zeta(s).
\end{align*}
Clearly $P_N$ is analytic in $\C$ except for a simple pole at $s=1$ with residue $1$. 

\begin{remark}
In the proof of Theorem \ref{UNIV1} we find it convenient to subtract and add $\zeta(s)$ to $\mathcal{E}_{n}(L,s)$ (see \eqref{elementarysplitting}). The main reason is that $\mathcal{E}_{n}(L,s)-\zeta(s)$ is an entire function and its generalized Dirichlet series representation converges in $\Re s>\frac12$ for almost every $L\in X_n$ (cf.\ the discussion in and just above Lemma \ref{TAILLEMMA} below). On the other hand, the second copy of $\zeta(s)$ plays no essential role in this investigation. We note that $P_N(L,s)$ is simply the sum of $\zeta(s)$ and a suitable truncation of $\mathcal{E}_{n}(L,s)-\zeta(s)$ that is easy to work with in connection with the approximation of Dirichlet polynomials (see the proof of Proposition \ref{KEYLEMMA} below).
\end{remark}

The following approximation result is the key technical ingredient in the proof of Theorem \ref{UNIV1}.   

\begin{prop}\label{KEYLEMMA}
Let $\Omega$ be a simply connected region in $D$ that is symmetric with respect to the real axis. Then, for any $\ve>0$, any compact set $K\subset\Omega$ and any holomorphic function $f$ on $\Omega$ that is real-valued on $\Omega\cap\R$, there exist $N\in\Z^+$ and a constant $\delta>0$ such that
\begin{align}\label{MAINTERM}
\liminf_{n\to\infty}{\rm Prob}_{\mu_n}\left\{L\in X_n : \max _{s\in K}\left|P_M(L,s)-f(s)\right|<\ve\right\}>\delta
\end{align}
holds for any fixed $M\geq N$.
\end{prop}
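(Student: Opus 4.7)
The plan is to apply Lemma \ref{APPROX1} to $g(s):=f(s)-\zeta(s)$, which is holomorphic on $\Omega$ (since $1\notin D\supset\Omega$) and real-valued on $\Omega\cap\R$. This produces some $N_0\in\Z^+$ and $a_j\in\{-1,0,1\}$ such that the Dirichlet polynomial $Q(s):=\sum_{j=1}^{N_0}a_jj^{-s}$ satisfies $\max_{s\in K}|Q(s)-g(s)|<\ve/3$. For a threshold $T\geq N_0$ to be chosen below, I would split
\begin{align*}
P_M(L,s)-f(s)=\bigl(\zeta(s)+Q(s)-f(s)\bigr)+\bigl(A(L,s)-Q(s)\bigr)+B_M(L,s),
\end{align*}
where
\begin{align*}
A(L,s)&:=\sum_{\mathcal V_j(L)\leq T+1/2}\mathcal V_j(L)^{-s}-\sum_{j=1}^{T}j^{-s},\\
B_M(L,s)&:=\sum_{T+1/2<\mathcal V_j(L)\leq M}\mathcal V_j(L)^{-s}-\sum_{j=T+1}^{M}j^{-s}.
\end{align*}
It then suffices to bound the three summands individually by $\ve/3$ on a set of lattices of positive probability as $n\to\infty$.

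\medskip

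\noindent\textbf{Tail bound --- the main obstacle.} The key technical step is to control $B_M$ uniformly in $M\geq T$. I would rewrite it as a Stieltjes integral
\begin{align*}
B_M(L,s)=\int_{T+1/2}^{M}t^{-s}\,dS_{n,L}(t),
\end{align*}
integrate by parts, and invoke Corollary \ref{SnCOR}: on an event $E_{\mathrm{bd}}$ of $\mu_n$-probability $\geq 1-C^{-1}$, one has $|S_{n,L}(t)|\leq C_\ve(Ct)^{1/2}(\log t)^{3/2+\ve}$ for all $t\geq 5$. With $\sigma_0:=\min_{s\in K}(\Re s-\tfrac12)>0$, this yields
\begin{align*}
|B_M(L,s)|\ll_{C,K,\ve}T^{-\sigma_0}(\log T)^{3/2+\ve}\qquad(s\in K,\;M\geq T),
\end{align*}
uniformly in $M$, the uniformity stemming from convergence of $\int_T^{\infty}t^{-1-\sigma_0}(\log t)^{3/2+\ve}\,dt$. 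I would then fix $T$ large enough that this bound is $<\ve/3$ and set $N:=T$.

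\medskip

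\noindent\textbf{Good event from Poisson convergence.} With $T$ fixed, I would choose a small $\eta\in(0,\tfrac12)$ and define the event $E_\eta$ that, for each integer $j\in\{1,\dots,T\}$, the interval $(j-\eta,j+\eta)$ contains exactly $1+a_j$ of the $\mathcal V_i(L)$ when $j\leq N_0$ and exactly one when $N_0<j\leq T$, while no $\mathcal V_i(L)$ lies in the remainder of $[0,T+\tfrac12]$. Since $E_\eta$ is specified by exact counts in finitely many disjoint intervals with deterministic endpoints---a continuity set for the limit Poisson process, which almost surely places no points at prescribed locations---Theorem \ref{POISSON} yields
\begin{align*}
\lim_{n\to\infty}{\rm Prob}_{\mu_n}(E_\eta)=p_0>0,
\end{align*}
where $p_0$ is an explicit product of independent Poisson point masses. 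On $E_\eta$, a Taylor expansion of $(j+\xi)^{-s}$ for $|\xi|<\eta$ together with convergence of $\sum_j j^{-\Re s-1}$ for $\Re s>0$ gives $|A(L,s)-Q(s)|\ll_K\eta$ uniformly on $K$; I would then choose $\eta$ small enough that this is $<\ve/3$.

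\medskip

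\noindent\textbf{Combining the events.} Finally, taking $C:=2/p_0$ in Corollary \ref{SnCOR} gives ${\rm Prob}_{\mu_n}(E_{\mathrm{bd}})\geq 1-p_0/2$. On $E_\eta\cap E_{\mathrm{bd}}$, the three estimates combine to give $\max_{s\in K}|P_M(L,s)-f(s)|<\ve$ for every $M\geq N$, while a union bound yields
\begin{align*}
\liminf_{n\to\infty}{\rm Prob}_{\mu_n}(E_\eta\cap E_{\mathrm{bd}})\geq p_0-\tfrac{p_0}{2}=\tfrac{p_0}{2}=:\delta>0,
\end{align*}
uniformly in $M\geq N$, as required.
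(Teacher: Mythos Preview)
Your overall architecture matches the paper's: apply Lemma~\ref{APPROX1} to $f-\zeta$, split $P_M(L,s)-f(s)$ into a ``head'' piece on $[0,T+\tfrac12]$ controlled via the Poisson limit (Theorem~\ref{POISSON}) and a ``tail'' piece on $(T+\tfrac12,M]$ controlled via Corollary~\ref{SnCOR} and integration by parts. The decomposition and the individual estimates are fine.

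The gap is in \emph{Combining the events}. Your parameters are chosen circularly. In the tail step you need $T$ large enough that
\[
|B_M(L,s)|\ll_{K,\ve} C^{1/2}\,T^{-\sigma_0}(\log T)^{3/2+\ve}<\frac{\ve}{3},
\]
where the factor $C^{1/2}$ comes from the bound $|S_{n,L}(t)|\le C_\ve (Ct)^{1/2}(\log t)^{3/2+\ve}$ in Corollary~\ref{SnCOR}. You only fix $C$ afterwards, as $C=2/p_0$. But $p_0$ is the Poisson probability of your event $E_\eta$, which prescribes a specific configuration of roughly $T$ points in $[0,T+\tfrac12]$; a direct computation gives
\[
p_0=e^{-(T+1/2)}\,\frac{(2\eta)^{T'}}{\prod_j k_j!}\le e^{-(T+1/2)},
\]
so $C^{1/2}=(2/p_0)^{1/2}\gtrsim e^{T/2}$. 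Hence $C^{1/2}T^{-\sigma_0}\to\infty$ as $T\to\infty$, and no choice of $T$ makes the tail small for this $C$. Equivalently, if you fix $C$ first and then choose $T$, the union bound $\mu_n(E_\eta\cap E_{\mathrm{bd}})\ge p_0-(1-\mu_n(E_{\mathrm{bd}}))\ge p_0-C^{-1}$ is negative, since $p_0$ decays exponentially in $T$ while $C^{-1}$ is fixed.

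The paper avoids this by \emph{not} using a union bound. It takes $C=2$ once and for all (so the tail event has $\mu_n$-measure $\ge\tfrac12$, independently of~$N$), and then exploits that the head event depends only on the process in $(0,N]$ while, for each fixed $M$, the tail event $\{|Q_{N,M}|<\ve/3\}$ depends only on the process in $(N,M]$. Since the limiting Poisson process decomposes as independent Poisson processes on $(0,N)$ and $(N,\infty)$, these two events become asymptotically independent as $n\to\infty$, yielding a joint lower bound of order $\delta_0/4$ (see the final paragraph of the paper's proof of Proposition~\ref{KEYLEMMA}). To repair your argument, replace the union bound with this asymptotic independence: fix $C=2$, choose $T$ for the tail, and then argue that the limit probability of $E_\eta\cap\{|B_M|<\ve/3\}$ factors.
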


\begin{proof}
Let $h(s):=\zeta(s)-f(s)$ and note that $h$ is holomorphic on $\Omega$ and real-valued on $\Omega\cap\R$. By Lemma \ref{APPROX1}, there exist $N_0\in\Z^+$ and coefficients $a_j\in\{-1,0,1\}$, $1\leq j\leq N_0$, such that
\begin{align}\label{STEP1}
\max_{s\in K}\Bigg|\sum_{j=1}^{N_0}a_jj^{-s}-h(s)\Bigg|<\frac{\ve}{3}\,.
\end{align}
We let $N>\max(N_0,5)$ be an integer and set
\begin{align*}
b_j=\begin{cases}
1-a_j&\text{if $1\leq j\leq N_0$,}\\
1&\text{if $N_0<j\leq N$.}
\end{cases}
\end{align*}
Hence
\begin{align}\label{STEP2}
P_N(L,s)-f(s)=\sum_{\mathcal V_j(L)\leq N}\mathcal V_j(L)^{-s}-\sum_{j=1}^Nb_jj^{-s}+\Bigg(h(s)-\sum_{j=1}^{N_0}a_jj^{-s}\Bigg).
\end{align}
In addition it is useful to note that  
\begin{equation*}
\sum_{j=1}^Nb_j j^{-s}=\sum_{j=1}^{\mathcal N}n_j^{-s},
\end{equation*}
for some $N-N_0\leq \mathcal N\leq N+N_0$ and a certain nondecreasing sequence $\{n_j\}_{j=1}^{\mathcal N}$ of positive integers. Note in particular that $n_{\mathcal N}=N$.\footnote{It is in principal straightforward, but notationally impractical, to write down explicit formulas for all the integers $n_j$.}

Next, we find that there exist $N_1\in\Z^+$  and $\delta_0>0$ (depending on $K$, $N$ and $\ve$) such that for $n\geq N_1$, we have 
\begin{align}\label{STEP3} 
{\rm Prob}_{\mu_n}\left\{L\in X_n :\max_{s\in K}\Bigg|\sum_{\mathcal V_j(L)\leq N}\mathcal V_j(L)^{-s}-\sum_{j=1}^{\mathcal N}n_j^{-s}\Bigg|<\frac{\ve}{3}\right\}>\delta_0\,.
\end{align}
We recall that $n_{\mathcal N}=N$ and define $n_{\mathcal N+1}:=N+1$. It is clear that \eqref{STEP3} follows if, for a sufficiently small $0<\theta<1$ (depending on $K$, $N$ and $\ve$),
\begin{equation*}
n_j-\theta <\mathcal V_j(L) < n_j \qquad (j=1,\ldots,\mathcal N+1)
\end{equation*}
holds with probability $>\delta_0$ whenever $n \geq N_1$.
Hence, it is sufficient to prove that
\begin{gather}\label{THETA1} 
\mathcal V_1(L)  \in I_1:=\left (n_1-\theta, n_1-\frac {\mathcal N \theta} {\mathcal N+1} \right),\\ 
\intertext{and that}
\label{THETA2}
\mathcal V_{j+1}(L)-\mathcal V_{j}(L)\in I_{j+1}:=\left(n_{j+1}-n_j, n_{j+1}-n_j+ \frac{ \theta} {\mathcal N+1}\right)  
\end{gather}
holds for $1 \leq j \leq \mathcal N$, with the given probability for all large enough $n$. To prove this, we first note that it follows from Theorem \ref{POISSON}  and \cite[Section 4.1]{kingman} that $\{\mathcal V_1(L)\}\cup\{\mathcal V_{j+1}(L)-\mathcal V_j(L)\}_{j=1}^{\mathcal N}$ tend in distribution, as $n\to\infty$, to a collection $\{Y_j\}_{j=1}^{\mathcal N+1}$ of independent exponentially distributed random variables of mean $1$. The probability that $(Y_1,\ldots,Y_{\mathcal N+1})$ lies in the open set $\prod_{j=1}^{\mathcal N+1} I_j$ may be explicitly calculated as
\begin{gather*}
  \delta_0^*:=\left(\exp \left(\frac \theta {\mathcal N+1} \right)-1 \right)^{\mathcal N+1} e^{-N-1}>0.
\end{gather*}
Hence, it follows from \cite[Theorem 2.1]{billconv} that the lower limit as $n\to \infty$ of the probability  that \eqref{THETA1} and \eqref{THETA2} holds for  $1 \leq j \leq {\mathcal N}$ is greater than or equal to $\delta_0^*$. We conclude that \eqref{STEP3} holds for any  $\delta_0^*>\delta_0>0$ and all $n \geq N_1$ (where $N_1$ depends on $\delta_0$).

Note that in the special case where $M=N$ and $\delta=\delta_0$, the inequality \eqref{MAINTERM} follows from \eqref{STEP1}, \eqref{STEP2}, \eqref{STEP3} and an application of the triangle inequality. It is also clear from the argument above that we can increase $N$ as much as we like at the cost of having a possibly smaller constant $\delta_0$. It remains to show that  \eqref{MAINTERM} holds with the same right-hand side $\delta$ for any fixed $M\geq N$. Hence, for $M\geq N$, we study the finite sums
\begin{align*}
Q_{N,M}(L,s):=&P_M(L,s)-P_N(L,s)\\
=&\sum_{N<\mathcal V_j(L)\leq M}\mathcal V_j(L)^{-s}-\sum_{j=N+1}^Mj^{-s}=\int_{N}^{M}t^{-s}\,dS_{n,L}(t),
\end{align*} 
where $S_{n,L}(t)$ is defined in \eqref{Sn}. Let $\sigma_0=\min\left\{\Re(s) : s\in K\right\}$ and $\eta=(\sigma_0-\frac12)/2$. It follows from Corollary \ref{SnCOR} with $C=2$ that for each $n\geq3$ there exists a set $Y_n\subset X_n$ with $\mu_n(Y_n)\geq\frac12$ and such that for all lattices $L\in Y_n$ and all $t\geq5$ we have $|S_{n,L}(t)|\ll_{\eta}t^{\frac12+\eta}$. Here it is important to note that the implied constant is independent of both $n$ and $L$. Integrating by parts we get, for any $L\in Y_n$ and all $M\geq N$,
\begin{align*} 
\left|Q_{N,M}(L,s)\right|=&\left|\int_{N}^{M}t^{-s}\,dS_{n,L}(t)\right|\\
=&\left|\left[t^{-s}S_{n,L}(t)\right]_N^{M}+s\int_N^{M}t^{-s-1}S_{n,L}(t)\,dt\right|
\ll_{\eta,K}N^{\frac12+\eta-\sigma_0},\nonumber
\end{align*} 
uniformly for all $s\in K$. Since we can make the right-hand side above as small as we desire by increasing $N$ as needed, we obtain, for any $M\geq N$ and $n\geq 3$,
\begin{align}\label{STEP4}
{\rm Prob}_{\mu_n}\left\{L\in X_n :\max_{s\in K}\left|Q_{N,M}(L,s)\right|<\frac{\ve}{3}\right\}\geq\frac12\,.
\end{align}

Finally, given $M\geq N$, we would like to use \eqref{STEP1}, \eqref{STEP2}, \eqref{STEP3} and \eqref{STEP4} to conclude that \eqref{MAINTERM} holds with (say) $\delta=\delta_0/4$. In order to verify that this is possible, we recall that Theorem \ref{POISSON} states that the volumes $\{\mathcal V_{j}(L)\}_{j=1}^{\infty}$ tend in distribution (as $n\to\infty$) to the points of a Poisson process $\mathcal P$ on the positive real line of constant intensity $1$. Note that the proof of \eqref{STEP3} only uses the restriction of these processes to the open interval $(0,N)$ and similarly that the proof of \eqref{STEP4} only uses the restriction of these processes to $(N,\infty)$. Now the crucial observation is that the process $\mathcal P$ may be realized as a union of a Poisson process on $(0,N)$ and an \emph{independent} Poisson process on $(N,\infty)$, both of intensity $1$ (see, e.g., \cite[Section 2.2]{kingman}). Hence it follows from \eqref{STEP1}, \eqref{STEP2}, \eqref{STEP3} and \eqref{STEP4}, that when $n\geq \max(N_1,3)$ (where we might need to increase $N_1$ depending on $M$) we have 
\begin{align}\label{FINALSTEP} 
{\rm Prob}_{\mu_n}\left\{L\in X_n :\max_{s\in K}\left|P_M(L,s)-f(s)\right|<\ve\right\}>\frac{\delta_0}{4}\,.
\end{align}
In fact, as $n\to\infty$ we have the above inequality with any right-hand side strictly smaller than $\frac{\delta_0}{2}$. Since the lower bound in \eqref{FINALSTEP} holds for any fixed $M\geq N$, the proof is complete.
\end{proof}

Next we define, for $N\in\Z^+$ and $\Re(s)>1$, the function
\begin{align*}
Q_N(L,s):=\sum_{\mathcal V_j(L)>N}\mathcal V_j(L)^{-s}-\sum_{j=N+1}^{\infty}j^{-s}=\int_N^{\infty}t^{-s}\,dS_{n,L}(t),
\end{align*}
and note that this integral representation is holomorphic in the half-plane $\Re(s)>\frac12$ for almost every $L\in X_n$.

\begin{lem}\label{TAILLEMMA}
Let $n\geq 3$. Let $\ve,\delta>0$ and let $K$ be a compact subset of the half-plane $\Re(s)>\frac12$. Then there exists $M\in\Z^+$ such that 
\begin{align*}
{\rm Prob}_{\mu_n}\left\{L\in X_n : \max _{s\in K}\bigg|\int_{M'}^{\infty}t^{-s}\,dS_{n,L}(t)\bigg|<\ve\right\}\geq1-\delta\,
\end{align*}
for all $M'\geq M$. 
\end{lem}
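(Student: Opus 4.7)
The plan is to reduce the lemma to a deterministic tail estimate: first excise an exceptional set of lattices using Corollary \ref{SnCOR}, then handle the remaining tail via integration by parts, exploiting the strict positivity of $\Re(s)-\tfrac12$ on the compact set $K$.

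First, I would set $\sigma_0:=\min_{s\in K}\Re(s)>\tfrac12$, pick some $\eta\in(0,\sigma_0-\tfrac12)$, and apply Corollary \ref{SnCOR} with $C$ chosen so that $C^{-1}\leq\delta$. This produces a measurable set $Y_n\subset X_n$ with $\mu_n(Y_n)\geq 1-\delta$ on which
\[
|S_{n,L}(t)|\leq C_\eta\,\delta^{-1/2}\,t^{1/2}(\log t)^{3/2+\eta}\qquad(\forall\,t\geq 5).
\]
Next, for $L\in Y_n$, any $M'\geq 5$, and any $s\in K$, I would apply Abel summation exactly as in the proof of Proposition \ref{KEYLEMMA}:
\[
\int_{M'}^{\infty}t^{-s}\,dS_{n,L}(t)=-(M')^{-s}S_{n,L}(M')+s\int_{M'}^{\infty}t^{-s-1}S_{n,L}(t)\,dt,
\]
where the boundary contribution at $\infty$ vanishes because the pointwise bound gives $|t^{-s}S_{n,L}(t)|\ll t^{1/2+\eta-\sigma_0}(\log t)^{3/2+\eta}\to 0$. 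Inserting the same pointwise bound into both terms and using that $|s|$ is bounded on the compact set $K$, each term is of size $O_{\eta,K,\delta}\!\bigl((M')^{1/2+\eta-\sigma_0}(\log M')^{3/2+\eta}\bigr)$, uniformly in $s\in K$ and in $L\in Y_n$. Since $\tfrac12+\eta-\sigma_0<0$, this majorant tends to $0$ as $M'\to\infty$, so I can select $M\in\Z^+$ large enough (depending on $\ve$, $\delta$, $K$, $\eta$) that the displayed integral has modulus less than $\ve$ for every $M'\geq M$ and every $L\in Y_n$.

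There is no substantive analytic obstacle; once Corollary \ref{SnCOR} is in hand the estimate is essentially routine. The only delicate point is the order of quantification: the tolerance $\delta$ must be fixed first, as it determines the constant $C=1/\delta$ appearing in the pointwise bound on $S_{n,L}$, and only after that is $M$ chosen large in terms of the remaining parameters. I would also remark that the argument is uniform in $n\geq 3$, which is more than needed here, but matches the style of estimates elsewhere in the paper.
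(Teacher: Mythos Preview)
Your proposal is correct and follows essentially the same approach as the paper: apply Corollary~\ref{SnCOR} with $C\geq\delta^{-1}$ to obtain a good set of lattices, then integrate by parts and use compactness of $K$ to get a uniform tail bound that decays as $M'\to\infty$. The only cosmetic difference is that the paper absorbs the logarithmic factor $(\log t)^{3/2+\eta}$ into a slightly larger power of $t$ (choosing $\eta=(\sigma_0-\tfrac12)/2$ and writing the bound simply as $t^{1/2+\eta}$), whereas you retain the logarithm explicitly; either way the exponent of $M'$ is negative and the conclusion is the same.
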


\begin{proof}
Let $\delta>0$ be given. Recall from the proof of Proposition \ref{KEYLEMMA} that $\sigma_0=\min\left\{\Re(s) : s\in K\right\}$ and $\eta=(\sigma_0-\frac12)/2$. It follows from Corollary \ref{SnCOR} that for each $n\geq3$ there exists a set $Z_n\subset X_n$ with $\mu_n(Z_n)\geq1-\delta$ and such that for all lattices $L\in Z_n$ and all $t\geq5$ we have $|S_{n,L}(t)|\ll_{\delta,\eta}t^{\frac12+\eta}$, where the implied constant is independent of $n$ and $L$. Now, for any $L\in Z_n$ and all $M\geq5$, we get
\begin{align*}
\left|\int_{M}^{\infty}t^{-s}\,dS_{n,L}(t)\right|=\left|\left[t^{-s}S_{n,L}(t)\right]_M^{\infty}+s\int_M^{\infty}t^{-s-1}S_{n,L}(t)\,dt\right|
\ll_{\delta,\eta,K}M^{\frac12+\eta-\sigma_0},
\end{align*} 
uniformly for all $s\in K$. Since we can make the right-hand side above as small as desired by choosing $M$ large enough, the proof is complete.
\end{proof}

\begin{proof}[Proof of Theorem \ref{UNIV1}]
Let $\ve>0$ be given and let $N$ and $\delta$ be given by Proposition \ref{KEYLEMMA} applied with $\ve/2$, $K$ and $f$. Let furthermore $M$ be given by Lemma \ref{TAILLEMMA} applied with $\ve/2$, $\delta/2$ and $K$. (Note that we without loss of generality may assume that $M\geq N$.) Hence, by Proposition \ref{KEYLEMMA} and Lemma \ref{TAILLEMMA}, we find that
\begin{multline*}
\left|\mathcal{E}_{n}(L,s)-f(s)\right|=\left|\mathcal{E}_{n}(L,s)-\zeta(s)+\zeta(s)-f(s)\right|\\
\leq\left|P_{M}(L,s)-f(s)\right|+\left|Q_{M}(L,s)\right|<\frac{\ve}{2}+\frac{\ve}{2}=\ve
\end{multline*}
holds, in the limit as $n\to\infty$, with a probability of at least $\delta/2$. This finishes the proof. 
\end{proof}

\section{Proof of Theorem \ref{UNIV2}}

The general strategy for the proof of Theorem \ref{UNIV2} is the same as in the proof of Theorem \ref{UNIV1}. The main difference is that Lemma \ref{APPROX1} is no longer at our disposal. The following approximation lemma will take its place in the present argument.

\begin{lem}\label{APPROX2} 
Let $\Omega$ be a simply connected region in $\C$ that is
symmetric with respect to the real axis. Then, for any $\varepsilon>0$, any
compact set $K\subset\Omega$ and any holomorphic function $f$ on $\Omega$
that is real-valued on $\Omega\cap\R$, there exists a Dirichlet polynomial 
\begin{gather*} 
A(s)=\sum_{k=1}^{N_0} a_k \lambda_k^{-s}  
\\ \intertext{with integer coefficients such that}
 \max_{s \in K} \left|A(s)-f(s)\right|<\varepsilon. 
 \end{gather*}
\end{lem}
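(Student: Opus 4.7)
The plan is to reduce the claim to polynomial approximation via Runge's theorem and then to express a real polynomial as a uniform limit of integer-coefficient exponential sums using finite-difference identities. Since $\Omega$ is simply connected, polynomials are dense in the space of holomorphic functions on $\Omega$ (in the topology of uniform convergence on compacta), so there exists a polynomial $P(s)$ with $\max_{s\in K}|P(s)-f(s)|<\varepsilon/3$. The hypotheses that $f$ is real on $\Omega\cap\R$ and that $\Omega$ is symmetric under conjugation allow me to replace $P$ by $\tfrac{1}{2}\bigl(P(s)+\overline{P(\bar{s})}\bigr)$, so without loss of generality $P$ has real coefficients. Approximating these real coefficients by rationals, at the cost of a further $\varepsilon/3$, I may assume my target is a polynomial $\tilde{P}(s)=\tfrac{1}{N}\sum_{j=0}^{d}n_js^j$ with $n_j,N\in\Z$.

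For each $j\geq 1$ I would use the finite-difference identity
\begin{equation*}
s^j=\lim_{m\to\infty}m^j\sum_{k=0}^{j}(-1)^{j-k}\binom{j}{k}e^{ks/m},
\end{equation*}
valid uniformly on every compact subset of $\C$, which expresses $s^j$ as $m^j=h^{-j}$ times the $j$-th forward difference of $\mu\mapsto e^{\mu s}$ at $\mu=0$ with step $h=1/m$. If $m$ is chosen to be a sufficiently large multiple of $N$, then the scaled coefficients $(n_j/N)m^j(-1)^{j-k}\binom{j}{k}$ are all integers; after setting $\lambda_k=e^{-k/m}>0$ so that $e^{ks/m}=\lambda_k^{-s}$, this yields an integer-coefficient Dirichlet polynomial approximating $\tfrac{1}{N}n_js^j$ uniformly on $K$. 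Summing over $j=1,\ldots,d$ produces an integer-coefficient Dirichlet polynomial approximating $\tfrac{1}{N}\sum_{j=1}^{d}n_js^j$ uniformly on $K$.

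The chief obstacle is accounting for the constant term $n_0/N$, because $\lambda^{-s}|_{s=0}=1$ for all $\lambda>0$ forces any integer linear combination of such exponentials to take an integer value at $s=0$. In the application of this lemma in the proof of Theorem~\ref{UNIV2} the compact set $K$ always lies in the half-plane $\Re(s)>\tfrac{1}{2}$, so $0\notin K$, and on such $K$ the constant $n_0/N$ can itself be approximated by a suitable combination $a\lambda^{-s}+b\mu^{-s}$ with $a,b\in\Z$ and $\lambda,\mu>0$ close to $1$, using the expansion $\lambda^{-s}=1-(\log\lambda)s+O((\log\lambda)^2)$ to match both the value and the first derivative of the constant target at a reference point in $K$. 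The triangle inequality then assembles all the pieces into a single integer-coefficient Dirichlet polynomial $A(s)$ with $\max_{s\in K}|A(s)-f(s)|<\varepsilon$.
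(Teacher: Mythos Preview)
Your approach is essentially the paper's: reduce via Runge and the reflection symmetry to a real polynomial, then approximate each term by a finite difference of an exponential. The paper expands in falling factorials $b_m\,s(s-1)\cdots(s-m+1)$ and uses $\Delta_h^m[x\mapsto b_m x^s]$ at $x=1$ with $h$ chosen so that $b_m/h^m\in\Z$, which makes your intermediate rational-coefficient step unnecessary; your monomial version with forward differences of $\mu\mapsto e^{\mu s}$ is an equivalent reformulation, and the argument for $j\ge 1$ is correct.

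You are right to flag the constant term --- in fact the lemma as stated is \emph{false}: every integer-coefficient Dirichlet polynomial has $A(0)=\sum_k a_k\in\Z$, so $K=\{0\}$, $f\equiv\tfrac12$ is already a counterexample, and by the maximum principle the obstruction persists whenever $0$ lies in a bounded component of $\C\setminus K$. The paper's own proof shares this gap (its $m=0$ step tacitly needs $b_0\in\Z$). However, your repair does not work. With $\lambda,\mu$ near $1$ one has $a\lambda^{-s}+b\mu^{-s}=(a+b)-s(a\log\lambda+b\log\mu)+O\big((\log\lambda)^2+(\log\mu)^2\big)$ uniformly on $K$; killing the linear term still leaves the \emph{integer} $a+b$ as the leading part, and matching value and derivative at a single reference point yields only a local, not a uniform, approximation on $K$. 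A correct fix, sufficient for the only case the paper actually uses ($K\subset\{\Re(s)>\tfrac12\}$, so that $0$ lies in the unbounded component of $\C\setminus K^\sharp$), is to carry out the polynomial-approximation step on $K^\sharp\cup\{0\}$ rather than on $K^\sharp$, prescribing the value $0$ at the isolated point $0$; the resulting real polynomial $p$ then has $|p(0)|$ as small as desired, so $p(s)-p(0)$ still approximates $f$ on $K$ and has vanishing constant term, after which the finite-difference argument goes through unchanged.
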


\begin{proof}
The conditions on $f$ and $\Omega$ are sufficient to ensure that
\begin{gather} \label{eqfkonjugat}
 f(s)=\overline{f(\overline s)} \qquad (s \in \Omega).
\end{gather}
Let $K^\sharp=K \cup \overline K$, where $\overline K$ is the reflection of $K$ in the
real axis. By Runge's theorem (applied to a simply connected compact set $K'$ satisfying $K^\sharp\subset K'\subset \Omega$) there exists a polynomial $P(s)$ such that
\[ |P(s)-f(s)|<\frac{\varepsilon} 2 \qquad (s \in K^\sharp). \]
By the symmetry of $K^\sharp$, we have 
\[ \big|\overline{P(\overline s)}-\overline{f(\overline
s)}\big|<\frac{\varepsilon} 2 \qquad (s \in K^\sharp), \]
and by \eqref{eqfkonjugat} and the triangle inequality it follows that
\begin{gather} \label{Pdef}  |p(s)-f(s)|<\frac{\varepsilon} 2 \qquad (s 
\in K), 
\end{gather}
where
\[p(s)=\frac{P(s)+\overline{P(\overline s)}} 2 =
\frac{P(s)+\overline{P}(s)} 2 \]
is a polynomial with real coefficients. 

We now proceed to approximate the polynomial $p(s)$ by Dirichlet polynomials of the desired type. 
It is sufficient to prove that \[p_m(s)=b_m s(s-1) \cdots (s-m+1) \] may
be approximated by a Dirichlet polynomial $A_m(s)$, since $p(s)$
can be written as a sum of such polynomials, that is,
\begin{equation*}
p(s)=\sum_{m=0}^M p_m(s) 
\end{equation*}
for some nonnegative integer $M$ and suitable real numbers $\{b_m\}_{m=0}^M$. Let 
\begin{equation*}
g(x)=b_m x^s. 
\end{equation*} 
It is clear that 
\begin{equation*}
g^{(m)}(x)= b_m s (s-1) \cdots (s-m+1) x^{s-m}.
\end{equation*} 
Note that this derivative can be approximated up to any given accuracy by $\Delta_h^m g(x)$, where
\begin{equation*}
\Delta_h g(x) := \frac{g(x+h)-g(x)}{h},
\end{equation*} 
by choosing $h$ small enough. Note also that
\begin{equation*}
\Delta_h^m g(x)=\frac{b_m}{h^m}\sum_{j=0}^m(-1)^{m-j}\binom{m}{j}\left(x+jh\right)^s.
\end{equation*}
It follows that any such $\Delta_h^m g(x)$, where we
choose $x=1$ and $h$ such that $b_m/h^m$ is an integer, will
be of the desired type and can be chosen as our Dirichlet polynomial
$A_m(s)$. Thus, by choosing $h$ sufficiently small, we can ensure that 
 \begin{gather} \label{Aolikhet}
    \left| A_m(s) -p_m(s) \right| < \frac {\varepsilon}{2(M+1)} \qquad (0\leq m\leq M, s
\in K).
\end{gather}
 The result follows by choosing 
\[A(s)=\sum_{m=0}^M A_m(s) \]
and applying the triangle inequality together with the inequalities \eqref{Pdef} and
\eqref{Aolikhet}.
 \end{proof}

Given $\Lambda\in\R^+$, two lattices $L_1,L_2\in X_n$ and a complex number $s\in\C$, we define
\begin{align*}
\widetilde P_{\Lambda}(L_1,L_2,s):=\sum_{\mathcal V_j(L_1)\leq \Lambda}\mathcal V_j(L_1)^{-s}-\sum_{\mathcal V_j(L_2)\leq \Lambda}\mathcal V_j(L_2)^{-s},
\end{align*}
which clearly is an entire function of $s$.

\begin{prop}\label{KEYLEMMA2}
Let $\Omega$ be a simply connected region in the half-plane $\Re(s)>\frac12$ that is symmetric with respect to the real axis. Then, for any $\ve>0$, any compact set $K\subset\Omega$ and any holomorphic function $f$ on $\Omega$ that is real-valued on $\Omega\cap\R$, there exist constants $\delta,\Lambda_0\in\R^+$ such that
\begin{align}\label{MAINTERM2}
\liminf_{n\to\infty}{\rm Prob}_{\mu_n\times \mu_n}\left\{(L_1,L_2)\in X_n^2 : \max _{s\in K}\left|\widetilde P_{\Lambda}(L_1,L_2,s)-f(s)\right|<\ve\right\}>\delta
\end{align}
holds for any fixed $\Lambda\geq\Lambda_0$.
\end{prop}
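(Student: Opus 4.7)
The proof will follow the same architecture as Proposition \ref{KEYLEMMA}, with three modifications. First, Lemma \ref{APPROX2} replaces Lemma \ref{APPROX1} so that we may approximate $f(s)$ directly (rather than $\zeta(s)-f(s)$) by a Dirichlet polynomial, now allowing general positive frequencies $\lambda_k$ rather than integers. Second, the two independently drawn lattices $L_1,L_2$ play the role previously shared between a single lattice $L$ and the deterministic tail $\sum_j j^{-s}$. Third, the entire half-plane $\Re(s)>\frac12$ replaces the strip $D$; this is possible because the target $A(s)$ can be matched exactly by the (entire) Dirichlet-polynomial building blocks from $\widetilde P_\Lambda$, so no information about $\zeta(s)$ inside $D$ is used.

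The plan is as follows. First, apply Lemma \ref{APPROX2} to obtain a Dirichlet polynomial
\begin{align*}
A(s)=\sum_{k=1}^{N_0}a_k\lambda_k^{-s}, \qquad a_k\in\Z,\ \lambda_k>0,
\end{align*}
satisfying $\max_{s\in K}|A(s)-f(s)|<\ve/3$. Split by the sign of $a_k$ and expand the multiplicities into repetitions to write $A(s)=A_+(s)-A_-(s)$ with
\begin{align*}
A_+(s)=\sum_{j=1}^{J_+}(\mu_j^+)^{-s},\qquad A_-(s)=\sum_{j=1}^{J_-}(\mu_j^-)^{-s},
\end{align*}
where $\mu_j^{\pm}>0$ (possibly repeated). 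Choose $\Lambda_0>\max_j\mu_j^{\pm}$ to be determined.

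Next, handle the tail. For any $\Lambda\geq\Lambda_0$, one has
\begin{align*}
\widetilde P_\Lambda(L_1,L_2,s)-\widetilde P_{\Lambda_0}(L_1,L_2,s)=\int_{\Lambda_0}^{\Lambda}t^{-s}\,dS_{n,L_1}(t)-\int_{\Lambda_0}^{\Lambda}t^{-s}\,dS_{n,L_2}(t).
\end{align*}
Integrating by parts and invoking Corollary \ref{SnCOR} (applied to each of $L_1,L_2$ with $C=8$, say) yields, on $K$, a bound of size $O_{K,\eta}\bigl(\Lambda_0^{1/2+\eta-\sigma_0}\bigr)$ for both integrals, where $\sigma_0=\min_{s\in K}\Re(s)>\frac12$ and $\eta=(\sigma_0-\frac12)/2$. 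Taking $\Lambda_0$ large enough depending on $K,\ve$ makes this difference $<\ve/3$ with probability $\geq 3/4$, uniformly in $n\geq3$ and in $\Lambda\geq\Lambda_0$.

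It then suffices to show that, with positive probability as $n\to\infty$, $\max_{s\in K}|\widetilde P_{\Lambda_0}(L_1,L_2,s)-A(s)|<\ve/3$. By independence of $L_1,L_2$ under $\mu_n\times\mu_n$, the triangle inequality reduces this to showing independently that, with positive probability, $\sum_{\mathcal V_j(L_i)\leq\Lambda_0}\mathcal V_j(L_i)^{-s}$ lies within $\ve/6$ of $A_{\pm}(s)$ on $K$. By Theorem \ref{POISSON}, the ordered points $\{\mathcal V_j(L_i)\}$ lying in $(0,\Lambda_0)$ converge jointly in distribution, as $n\to\infty$, to the ordered points of a unit-intensity Poisson process on $(0,\Lambda_0)$. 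For such a Poisson process, the event that it has exactly $J_+$ points, each inside a preassigned $\epsilon'$-neighborhood of the corresponding target $\mu_j^+$ (and no other points in $(0,\Lambda_0)$), has strictly positive probability; by continuity of $t\mapsto t^{-s}$ on $K$, choosing $\epsilon'$ small enough forces the induced sum to agree with $A_+(s)$ within $\ve/6$ uniformly on $K$. The analogous statement for $L_2$ and $A_-$ holds, and by independence the joint event has positive probability. Combining this with the tail estimate from the preceding paragraph and the approximation $\max_{s\in K}|A(s)-f(s)|<\ve/3$ via the triangle inequality produces \eqref{MAINTERM2} for some $\delta>0$ independent of $\Lambda\geq\Lambda_0$.

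The main technical nuisance is bookkeeping around multiplicities: when several $\mu_j^+$ coincide, the argument needs a positive density of Poisson configurations with several points clustered in a small interval, which is immediate for a Poisson process of positive intensity. Everything else is standard (a portmanteau argument to transfer from the Poisson limit to finite $n$, and assembling the independent events for $L_1$, $L_2$, and the tail), and should follow the template of the proof of Proposition \ref{KEYLEMMA} almost verbatim.
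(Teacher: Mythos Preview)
Your proposal is correct and follows essentially the same architecture as the paper's proof: Lemma \ref{APPROX2} in place of Lemma \ref{APPROX1}, splitting $A(s)$ by the sign of the coefficients so that the positive part is modeled by $L_1$ and the negative part by $L_2$, the same integration-by-parts tail estimate via the circle-problem bound (the paper writes it with $R_{n,L}$ rather than $S_{n,L}$, but the deterministic parts cancel in the difference so either works), and the same Poisson-limit argument to combine the main-term and tail events. The only point to make explicit when you write it up is the one the paper handles by invoking the independence of the Poisson process on $(0,\Lambda_0)$ and on $(\Lambda_0,\infty)$: the tail event and the main-term event are not independent for finite $n$, so a naive union bound does not give a positive lower bound unless $\delta_0$ is large; you need the asymptotic independence (exactly as in Proposition \ref{KEYLEMMA}) to obtain the uniform $\delta$.
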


\begin{proof}
By Lemma \ref{APPROX2}, there exist an integer $N_0\geq1$ and sequences $\{\lambda_k\}_{k=1}^{N_0}$ and $\{a_k\}_{k=1}^{N_0}$ of positive real numbers and integers respectively, such that
\begin{align}\label{STEP1B}
\max_{s\in K}\Bigg|\sum_{k=1}^{N_0}a_k\lambda_k^{-s}-f(s)\Bigg|<\frac{\ve}{3}\,.
\end{align}
Let $\lambda_{0}=5$, $C_1=\sum_{a_k>0}a_k$ and $C_2=\sum_{a_k<0}|a_k|$. By choosing $\Lambda_0\geq\max\{\lambda_k : 0\leq k\leq N_0\}$ sufficiently large, we find that there exist $N_1\in\Z^+$  and $\delta_0>0$ (depending on $K$, $\Lambda_0$ and $\ve$) such that for $n\geq N_1$, we have 
\begin{align}\label{STEP3B} 
{\rm Prob}_{\mu_n\times \mu_n}\left\{(L_1,L_2)\in X_n^2 :\max_{s\in K}\Bigg|\widetilde P_{\Lambda_0}(L_1,L_2,s)-\sum_{k=1}^{N_0}a_k \lambda_k^{-s}\Bigg|<\frac{\ve}{3}\right\}>\delta_0\,.
\end{align}
The proof of \eqref{STEP3B} is essentially the same as the proof of \eqref{STEP3} (see the proof of Proposition \ref{KEYLEMMA}). The only difference is that we in this case use the sequence $\{\mathcal V_{j}(L_1)\}_{j=1}^{C_1}$ to approximate the numbers $\lambda_k$ with positive coefficients $a_k$ and the sequence $\{\mathcal V_{j}(L_2)\}_{j=1}^{C_2}$ to approximate the numbers $\lambda_k$ with negative coefficients $a_k$ (in both cases with multiplicities according to the sizes of the corresponding $|a_k|$). Now, in the case where $\Lambda=\Lambda_0$ and $\delta=\delta_0$, the inequality \eqref{MAINTERM2} follows from \eqref{STEP1B}, \eqref{STEP3B} and an application of the triangle inequality. Note that we may increase $\Lambda_0$ at the cost of having a possibly smaller constant $\delta_0$. 

In order to show that  \eqref{MAINTERM2} holds with the same right-hand side $\delta$ for any fixed $\Lambda\geq\Lambda_0$, we study the finite sums
\begin{align*}
\widetilde Q_{\Lambda_0,\Lambda}(L_1,L_2,s):=&\widetilde P_{\Lambda}(L_1,L_2,s)-\widetilde P_{\Lambda_0}(L_1,L_2,s)\\
=&\sum_{\Lambda_0<\mathcal V_j(L_1)\leq\Lambda}\mathcal V_j(L_1)^{-s}-\sum_{\Lambda_0<\mathcal V_j(L_2)\leq\Lambda}\mathcal V_j(L_2)^{-s}\\
=&\int_{\Lambda_0}^{\Lambda}t^{-s}\,dR_{n,L_1}(t)-\int_{\Lambda_0}^{\Lambda}t^{-s}\,dR_{n,L_2}(t),
\end{align*} 
where $R_{n,L}(t)$ is defined in \eqref{Rndef}. Let $\sigma_0=\min\left\{\Re(s) : s\in K\right\}$ and $\eta=(\sigma_0-\frac12)/2$. It follows from Theorem \ref{Rnthm} with $C=2$ that for each $n\geq3$ there exists a set $U_n\subset X_n$ with $\mu_n(U_n)\geq\frac12$ and such that for all lattices $L\in U_n$ and all $t\geq5$ we have $|R_{n,L}(t)|\ll_{\eta}t^{\frac12+\eta}$. Integrating by parts we get, for all $L_1,L_2\in U_n$ and all $\Lambda\geq\Lambda_0$,
\begin{align}\label{QTILDECALC} 
\left|\widetilde Q_{\Lambda_0,\Lambda}(L_1,L_2,s)\right|\leq&\left|\int_{\Lambda_0}^{\Lambda}t^{-s}\,dR_{n,L_1}(t)\right|+\left|\int_{\Lambda_0}^{\Lambda}t^{-s}\,dR_{n,L_2}(t)\right|\ll_{\eta,K}\Lambda_0^{\frac12+\eta-\sigma_0},
\end{align} 
uniformly for all $s\in K$. Thus, since we can make the right-hand side above as small as we like by a sufficient increase of $\Lambda_0$, we obtain, for any $\Lambda\geq\Lambda_0$ and $n\geq 3$,
\begin{align}\label{STEP4B}
{\rm Prob}_{\mu_n\times\mu_n}\left\{(L_1,L_2)\in X_n^2 : \max_{s\in K}\big|\widetilde Q_{\Lambda_0,\Lambda}(L_1,L_2,s)\big|<\frac{\ve}{3}\right\}\geq\frac14\,.
\end{align}

Finally, given $\Lambda\geq\Lambda_0$, we use \eqref{STEP1B}, \eqref{STEP3B}, \eqref{STEP4B} and Theorem \ref{POISSON} to conclude that when $n\geq \max(N_1,3)$ (where we might need to increase $N_1$ depending on $\Lambda$) we have
\begin{align}\label{FINALSTEPB} 
{\rm Prob}_{\mu_n\times\mu_n}\left\{(L_1,L_2)\in X_n^2 :\max_{s\in K}\left|\widetilde P_{\Lambda}(L_1,L_2,s)-f(s)\right|<\ve\right\}>\frac{\delta_0}{8}\,.
\end{align}
This finishes the proof. 
\end{proof}

We define, for each $\Delta\in\R^+$ and $\Re(s)>1$, the function 
\begin{multline*}
\widetilde Q_{\Delta}(L_1,L_2,s):=\sum_{\mathcal V_j(L_1)>\Delta}\mathcal V_j(L_1)^{-s}-\sum_{\mathcal V_j(L_2)>\Delta}\mathcal V_j(L_2)^{-s}\\
=\int_{\Delta}^{\infty}t^{-s}\,dR_{n,L_1}(t)-\int_{\Delta}^{\infty}t^{-s}\,dR_{n,L_2}(t),
\end{multline*}
and note that this integral representation is holomorphic in the half-plane $\Re(s)>\frac12$ for almost every $(L_1,L_2)\in X_n^2$.

\begin{lem}\label{TAILLEMMA2}
Let $n\geq 3$. Let $\ve,\delta>0$ and let $K$ be a compact subset of the half-plane $\Re(s)>\frac12$. Then there exists $\Lambda\in\R^+$ such that 
\begin{align*}
{\rm Prob}_{\mu_n\times\mu_n}\left\{(L_1,L_2)\in X_n^2 : \max _{s\in K}\left|\int_{\Lambda'}^{\infty}t^{-s}\,dR_{n,L_1}(t)-\int_{\Lambda'}^{\infty}t^{-s}\,dR_{n,L_2}(t)\right|<\ve\right\}\geq1-\delta\,
\end{align*}
for all $\Lambda'\geq\Lambda$.
\end{lem}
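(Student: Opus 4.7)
The plan is to mimic the argument of Lemma \ref{TAILLEMMA} almost verbatim, with two small modifications: we employ the quantity $R_{n,L}$ in place of $S_{n,L}$, and we combine the required pointwise bound for two independent lattices through a union bound. The replacement of $S_{n,L}$ by $R_{n,L}$ is immaterial here because the deterministic linear term in $R_{n,L}(t)=N_{n,L}(t)-t$ is identical for the two lattices and cancels in the difference $dR_{n,L_1}(t)-dR_{n,L_2}(t)$; consequently the differential measure appearing in the statement is well adapted to Theorem \ref{Rnthm}.

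Concretely, I would first set $\sigma_0:=\min\{\Re(s):s\in K\}>\tfrac12$ and $\eta:=(\sigma_0-\tfrac12)/2>0$. Applying Theorem \ref{Rnthm} with $C=2/\delta$ (and any fixed $\ve'>0$ in the log factor) produces, for every $n\geq 3$, a set $U_n\subset X_n$ of $\mu_n$-measure at least $1-\delta/2$ on which $|R_{n,L}(t)|\ll_{\delta,\eta} t^{1/2+\eta}$ holds for all $t\geq 5$, with the implied constant independent of both $n$ and $L$. By the union bound on $X_n\times X_n$, the set of pairs $(L_1,L_2)\in U_n\times U_n$ has $(\mu_n\times\mu_n)$-measure at least $1-\delta$.

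For such a pair and any $\Lambda'\geq 5$, integration by parts gives, for $i\in\{1,2\}$,
\begin{align*}
\int_{\Lambda'}^{\infty} t^{-s}\,dR_{n,L_i}(t)=-(\Lambda')^{-s}R_{n,L_i}(\Lambda')+s\int_{\Lambda'}^{\infty}t^{-s-1}R_{n,L_i}(t)\,dt,
\end{align*}
where the boundary term at infinity vanishes since $\Re(s)\geq\sigma_0>\tfrac12+\eta$. Inserting the uniform bound for $R_{n,L_i}$ yields an estimate of order $(\Lambda')^{1/2+\eta-\sigma_0}=(\Lambda')^{-\eta}$, uniformly for $s\in K$ and with an implied constant depending only on $K$, $\delta$, and $\eta$. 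The triangle inequality then controls the difference of the two integrals by the same power of $\Lambda'$, so choosing $\Lambda$ large enough ensures the bound falls below $\ve$ for every $\Lambda'\geq\Lambda$, completing the argument.

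The proof is essentially routine once Theorem \ref{Rnthm} is in hand, so there is no real obstacle; the mildly delicate point is that the condition $\Re(s)>\tfrac12$ on $K$ is exactly what forces the exponent $-\eta$ to be negative and thereby secures convergence of the tail integral. This is the reason the lemma cannot be extended to the critical line itself using only the bound supplied by Theorem \ref{Rnthm}.
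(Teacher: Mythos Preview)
Your proof is correct and follows essentially the same approach as the paper: apply Theorem \ref{Rnthm} with $C\geq 2/\delta$ to obtain a good set of measure at least $1-\delta/2$ for each factor, take the product set via a union bound, and then use the integration-by-parts estimate of the type in \eqref{QTILDECALC} to bound each tail integral. The paper's own proof is a one-line reference to exactly these ingredients.
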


\begin{proof}
The result follows from Theorem \ref{Rnthm} applied with $C\geq2/\delta$ to an estimate similar to the one in \eqref{QTILDECALC}.
\end{proof}

\begin{proof}[Proof of Theorem \ref{UNIV2}]
Let $\ve>0$ be given and let $\Lambda_0$ and $\delta$ be given by Proposition \ref{KEYLEMMA2} applied with $\ve/2$, $K$ and $f$. Let furthermore $\Lambda$ be given by Lemma \ref{TAILLEMMA2} applied with $\ve/2$, $\delta/2$ and $K$. (We may assume that $\Lambda\geq\Lambda_0$.) Then, it follows from Proposition \ref{KEYLEMMA2} and Lemma \ref{TAILLEMMA2} that
\begin{align*}
\left|\mathcal{E}_{n}(L_1,s)-\mathcal{E}_{n}(L_2,s)-f(s)\right|
\leq\left|\widetilde P_{\Lambda}(L_1,L_2,s)-f(s)\right|+\left|\widetilde Q_{\Lambda}(L_1,L_2,s)\right|<\ve
\end{align*}
holds, in the limit as $n\to\infty$, with a probability of at least $\delta/2$.
\end{proof}

\section{Proof of the imaginary shift case (Theorem \ref{UNIV3})}\label{IMSHIFTSEC}
 
We will use Drungilas-Garunk{\v{s}}tis-Ka{\v{c}}{\.{e}}nas variant \cite[Theorem 2.1]{DruGarKac} of
a result of Gonek \cite{gonek} for general Dirichlet series. To do this we need a
mean square estimate in a half-plane.\footnote{We note here that Blomer \cite{blomer} recently stated a mean square estimate on the critical line $\frac n4+it$ for almost all $L \in X_n$. Using this bound instead of our Theorem \ref{MEANSQUARE} in the proof of Theorem \ref{UNIV3} would result in a stronger universality result valid in $D=\big\{s \in \C: \frac12<\Re(s)<1 \big\}$.} 
 
\begin{thm}\label{MEANSQUARE}
Let  $n \geq 2$. Then, for $\sigma>\frac34$ and for almost all $L \in X_n$, we have
\begin{equation*}
\int_{1}^T \left|E_n\left(L,\frac{n\left(\sigma+it\right)} 2\right)\right|^2\,dt \ll_{L,\sigma} T.
\end{equation*}
\end{thm}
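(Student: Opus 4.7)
The plan is to establish $\int_1^T|\mathcal{E}_n(L,\sigma+it)|^2\,dt\ll_{L,\sigma}T$ for $\sigma>\tfrac34$ and almost all $L\in X_n$, where $\mathcal{E}_n(L,s):=2^{s-1}V_n^{-s}E_n(L,ns/2)=\sum_{j\geq 1}\mathcal{V}_j(L)^{-s}$. This will imply Theorem~\ref{MEANSQUARE} because the normalizing factor $|2^{s-1}V_n^{-s}|$ is bounded on each vertical line. Since the classical bound $\int_1^T|\zeta(\sigma+it)|^2\,dt\ll T$ holds for $\sigma>\tfrac12$, it suffices to estimate $\int_1^T|\mathcal{E}_n(L,s)-\zeta(s)|^2\,dt$.

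The starting point would be the identity
\begin{equation*}
\mathcal{E}_n(L,s)-\zeta(s)=\int_0^\infty u^{-s}\,dS_{n,L}(u)=s\int_0^\infty u^{-s-1}S_{n,L}(u)\,du,
\end{equation*}
valid for $\Re(s)>\tfrac12$ and almost all $L\in X_n$ by integration by parts combined with Corollary~\ref{SnCOR}; this is essentially the mechanism already exploited in the proofs of Proposition~\ref{KEYLEMMA} and Lemma~\ref{TAILLEMMA}. I would split the integral at a parameter $X=X(T)$ to be chosen as a power of $T$. After integration by parts, the finite range integral equals $P_X(L,s)-\sum_{k\leq X}k^{-s}-X^{-s}S_{n,L}(X)$, where $P_X(L,s):=\sum_{\mathcal{V}_j(L)\leq X}\mathcal{V}_j(L)^{-s}$. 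For the tail $s\int_X^\infty u^{-s-1}S_{n,L}(u)\,du$, viewing it as $s\hat{g}(t)$ with $g(v):=e^{-\sigma v}S_{n,L}(e^v)\mathbf{1}_{\{v\geq\log X\}}$, Plancherel's theorem combined with Corollary~\ref{SnCOR} yields
\begin{equation*}
\int_{-\infty}^\infty|\hat g(t)|^2\,dt\ll_L\int_X^\infty u^{-2\sigma-1}|S_{n,L}(u)|^2\,du\ll_{L,\varepsilon}X^{1-2\sigma+\varepsilon},
\end{equation*}
so the tail contributes $\ll T^2X^{1-2\sigma+\varepsilon}$ to the mean square after the trivial bound $|s|^2\leq (T+\sigma)^2$.

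For the Dirichlet polynomial $P_X$ I would apply the Montgomery-Vaughan mean value theorem:
\begin{equation*}
\int_1^T|P_X(L,\sigma+it)|^2\,dt\leq\sum_{\mathcal{V}_j\leq X}\mathcal{V}_j^{-2\sigma}\bigl(T+O(\delta_j^{-1})\bigr),
\end{equation*}
where $\delta_j:=\min_{k\neq j}|\log\mathcal{V}_j-\log\mathcal{V}_k|$. The diagonal is $\ll_L T$ because $\sum_j\mathcal{V}_j^{-2\sigma}<\infty$ almost surely for $\sigma>\tfrac12$ (since $\mathcal{V}_j\sim j$ by Corollary~\ref{SnCOR}), and the classical partial sum $\sum_{k\leq X}k^{-s}$ has mean square $\ll T$. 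For the off-diagonal, using the approximately Poisson distribution of the consecutive spacings $\mathcal{V}_{j+1}-\mathcal{V}_j$ supplied by Theorem~\ref{POISSON}, one has $\delta_j^{-1}\sim\mathcal{V}_j/\min(E_{j-1},E_j)$ with $E_j:=\mathcal{V}_{j+1}-\mathcal{V}_j$; a Borel--Cantelli argument applied to the heavy-tailed random variables $1/\min(E_{j-1},E_j)$ then produces the almost sure estimate $\sum_{\mathcal{V}_j\leq X}\mathcal{V}_j^{-2\sigma}\delta_j^{-1}\ll_{L,\varepsilon}X^{2-2\sigma+\varepsilon}$.

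Balancing the off-diagonal contribution $X^{2-2\sigma+\varepsilon}$ against the tail contribution $T^2X^{1-2\sigma+\varepsilon}$ yields the simultaneous constraints $T^{1/(2\sigma-1-\varepsilon)}\lesssim X\lesssim T^{1/(2-2\sigma+\varepsilon)}$, which are compatible exactly when $\sigma>\tfrac34+O(\varepsilon)$; near $\sigma=\tfrac34$ the optimal choice is $X\approx T^{1/(2\sigma-1)}\approx T^2$. The hard part will be the off-diagonal control: since $\mathbb{E}[1/\delta_j]=\infty$ due to the heavy tail of $1/\min(E_{j-1},E_j)$, no second-moment bound suffices, and one must pass to an almost sure bound via Borel--Cantelli applied dyadically in $X$, exploiting both the near-Poisson statistics of $\{\mathcal{V}_j\}$ from Theorem~\ref{POISSON} and the uniform measure-theoretic control provided by Theorem~\ref{Rnthm}.
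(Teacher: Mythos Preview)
Your overall architecture---reduce to $\mathcal{E}_n(L,s)-\zeta(s)$, truncate at a parameter $X$, and control the tail via Plancherel and Corollary~\ref{SnCOR}---matches the paper. The divergence is in how you handle the truncated Dirichlet polynomial, and there the proposal has a genuine gap.

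Your off-diagonal estimate in Montgomery--Vaughan relies on controlling $\delta_j^{-1}$ through the ``approximately Poisson'' spacing of the $\mathcal{V}_j(L)$, citing Theorem~\ref{POISSON}. But Theorem~\ref{POISSON} is purely a statement about weak convergence of $\{N_{n,L}(t)\}$ as $n\to\infty$; it says nothing about the spacing distribution of $\{\mathcal{V}_j(L)\}_{j\geq 1}$ for a \emph{fixed} dimension $n\geq 2$, which is what Theorem~\ref{MEANSQUARE} requires. The only almost-sure input the paper supplies for fixed $n$ is the aggregate bound $|S_{n,L}(t)|\ll_{L,\varepsilon}t^{1/2+\varepsilon}$ of Corollary~\ref{SnCOR} (and Remark~\ref{SCHMIDTREMARK} for $n=2$), and that bound is entirely compatible with arbitrarily small gaps $\mathcal{V}_{j+1}(L)-\mathcal{V}_j(L)$ occurring frequently. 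Your Borel--Cantelli step therefore has no probabilistic foundation: to carry it out you would first need, for fixed $n$ and almost all $L$, a quantitative repulsion statement for consecutive $\mathcal{V}_j(L)$, and nothing of the sort is available here. Without it, the claimed bound $\sum_{\mathcal{V}_j\leq X}\mathcal{V}_j^{-2\sigma}\delta_j^{-1}\ll_{L,\varepsilon}X^{2-2\sigma+\varepsilon}$ is unsupported, and the whole balancing of $X$ against $T$ collapses.

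The paper sidesteps this by replacing Montgomery--Vaughan with Gallagher's lemma (Lemma~5.2), applied to the \emph{difference} $\sum_{\mathcal{V}_j(L)\leq X}\mathcal{V}_j(L)^{-s}-\sum_{j\leq X}j^{-s}$ with $X=T^4$. Gallagher's lemma converts the mean square into an integral of short-interval increments $|\Delta(x+\delta)-\Delta(x)|^2$ with $\delta\asymp 1/T$, and these are bounded in Lemma~\ref{PARTIALSUMSESTIMATE} by writing
\[
|\Delta(x+\delta)-\Delta(x)|^2\leq\Bigl(\max_{x}|\Delta(x+\delta)-\Delta(x)|\Bigr)\cdot|\Delta(x+\delta)-\Delta(x)|,
\]
controlling the sup factor via integration by parts against $S_{n,L}$, and the $L^1$ factor by the monotonicity of the two pieces $\Delta_1,\Delta_2$ separately (a telescoping argument). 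The point is that this route uses only the bound $|S_{n,L}(t)|\ll t^{1/2+\varepsilon}$ and no spacing information at all, which is exactly why it goes through for every fixed $n\geq 2$.
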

 
By the definition of the normalized Epstein zeta function, the triangle inequality and the well known mean square estimate for the Riemann zeta function
in the half-plane $\sigma>\frac12$ (see e.g. Ivi{\'c} \cite{ivic}), it is sufficient to prove that
\begin{gather} \label{eps}
\int_1^T \big| \mathcal{E}_n(L,\sigma+it)-\zeta(\sigma+it) \big|^2\,dt \ll_{L,\sigma} T .
\end{gather}
We will prove this estimate using Gallagher's lemma \cite[Lemma 1]{gallagher}.

\begin{lem}[Gallagher]
Let $\delta=\theta/T$, with $0<\theta<1$, and let $$S(t):=\sum_k c_k e(v_k t)$$ be an absolutely convergent exponential sum having only real frequencies $v_k$. Then
\begin{equation*}
\int_{-T}^T \left|S(t)\right|^2\,dt \ll_\theta \int_{-\infty}^\infty \bigg|\delta^{-1} \sum_{x \leq v_k \leq x+\delta}c_k \bigg|^2\,dx.
\end{equation*}
\end{lem}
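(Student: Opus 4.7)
My plan is to reduce Gallagher's inequality to a direct application of Plancherel's theorem. I would set
\[
M(x):=\sum_{x\leq v_k\leq x+\delta}c_k=\sum_k c_k\,\mathbf{1}_{[v_k-\delta,\,v_k]}(x),
\]
so that the right-hand side of the lemma is precisely $\delta^{-2}\|M\|_{L^2(\mathbb{R})}^2$. The key observation is that, up to a clean multiplicative factor, the Fourier transform of $M$ reproduces $S$: interchanging sum and integral (legitimate thanks to absolute convergence) gives
\[
\widehat M(t)=\int_{-\infty}^{\infty}M(x)\,e(-xt)\,dx=\frac{1-e(\delta t)}{-2\pi i t}\,S(-t),
\]
whence $|\widehat M(t)|=|\sin(\pi\delta t)|\,|S(-t)|/(\pi|t|)$.

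Plancherel's theorem followed by the change of variables $t\mapsto -t$ then yields
\[
\int_{-\infty}^{\infty}|M(x)|^2\,dx=\int_{-\infty}^{\infty}\frac{\sin^2(\pi\delta t)}{\pi^2 t^2}\,|S(t)|^2\,dt.
\]
To conclude, I would bound the weight from below on the interval $|t|\leq T$. Since $\delta=\theta/T$, one has $|\pi\delta t|\leq\pi\theta<\pi$ throughout this range; the elementary inequality $|\sin(y)|\geq|y|\cdot|\sin(\pi\theta)/(\pi\theta)|$ for $|y|\leq\pi\theta$ therefore gives
\[
\frac{\sin^2(\pi\delta t)}{\pi^2 t^2}\,\gg_\theta\,\delta^2\qquad(|t|\leq T).
\]
Truncating the Plancherel identity to $[-T,T]$ and dividing through by $\delta^2$ delivers the claimed inequality.

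The only point requiring genuine attention is the $\theta$-dependence of the implied constant: it comes from the factor $(\pi\theta/\sin(\pi\theta))^2$ and blows up as $\theta\to 1^-$, which is precisely why the hypothesis demands $\theta<1$ strictly. The interchange of sum and integral in the definition of $\widehat M$ is uncontroversial under absolute convergence, and one may assume $M\in L^2(\mathbb{R})$ without loss of generality—if not, the right-hand side of the lemma is infinite and the statement is vacuous. Thus the main step is the Fourier-analytic setup identifying $\widehat M$ with a rescaled version of $S(-t)$; once that is in place, the whole argument collapses to Plancherel plus a one-line estimate.
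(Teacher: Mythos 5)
Your proof is correct, and it is essentially the classical argument: the paper does not prove this lemma at all but quotes it from Gallagher \cite[Lemma 1]{gallagher}, whose original proof is exactly this Plancherel computation (Fourier transform of $M(x)=\sum_k c_k\mathbf{1}_{[v_k-\delta,v_k]}(x)$ equals $\frac{1-e(\delta t)}{-2\pi i t}S(-t)$, then bound $\sin^2(\pi\delta t)/(\pi\delta t)^2$ from below by $(\sin(\pi\theta)/(\pi\theta))^2$ on $|t|\leq T$). Your handling of the side issues --- Fubini via $\sum_k|c_k|<\infty$, the vacuity of the statement when $M\notin L^2$, and the $\theta$-dependence $(\pi\theta/\sin(\pi\theta))^2$ of the implied constant --- is all sound, so there is nothing to correct.
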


We will not be able to apply Gallagher's lemma directly, since it assumes that the exponential sum $S(t)$ is absolutely convergent. Thus, we will need to truncate our Dirichlet series; in fact, we will apply the following weak approximate functional equation.

\begin{lem}\label{LEMMAAPPROXFUNC} 
Let $\ve>0$. Let $n \geq 2$ and assume that $\Re(s)>\frac12$. Then, for any $X\geq5$ and almost every $L \in X_n$, we have
\begin{equation*}
\mathcal{E}_n(L,s)-\zeta(s) = \sum_{\mathcal V_j(L)\leq X}\mathcal V_j(L)^{-s}-\sum_{1\leq j\leq X}j^{-s}+ O_{L,\ve}\left(|s| X^{\frac12-\Re(s)+\varepsilon}\right).
\end{equation*}
\end{lem}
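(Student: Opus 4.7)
The plan is to represent $\mathcal{E}_n(L,s)-\zeta(s)$ as a single Stieltjes integral against $dS_{n,L}(t)$, truncate at height $X$, and estimate the tail by partial summation using an almost-sure pointwise bound on $|S_{n,L}(t)|$. The first step is to upgrade Corollary \ref{SnCOR} (which controls $S_{n,L}$ only with probability $1-C^{-1}$) to the almost-sure statement that for every $\varepsilon>0$ there is a $\mu_n$-full-measure set of lattices $L\in X_n$ on which $|S_{n,L}(t)|\ll_{L,\varepsilon}t^{\frac12+\varepsilon}$ holds for all $t\geq 5$; one obtains this by intersecting the sets appearing in the corollary over $C=1,2,3,\ldots$ and absorbing the logarithmic factors into an $\varepsilon$ (for $n=2$ this is simply Remark \ref{SCHMIDTREMARK}).

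Next, I would establish the identity first in the region of absolute convergence $\Re(s)>1$. Writing $\mathcal{E}_n(L,s)=\int_{0^+}^{\infty}t^{-s}\,dN_{n,L}(t)$ and $\zeta(s)=\int_{0^+}^{\infty}t^{-s}\,d\lfloor t\rfloor$, the difference becomes $\int_{0^+}^{\infty}t^{-s}\,dS_{n,L}(t)$. Splitting the integration at $X$, the contribution from $(0,X]$ evaluates directly to $\sum_{\mathcal V_j(L)\leq X}\mathcal V_j(L)^{-s}-\sum_{1\leq j\leq X}j^{-s}$, which is exactly the explicit piece appearing in the lemma. For the tail, integration by parts yields
\begin{equation*}
\int_X^{\infty}t^{-s}\,dS_{n,L}(t)=-X^{-s}S_{n,L}(X)+s\int_X^{\infty}t^{-s-1}S_{n,L}(t)\,dt,
\end{equation*}
where the pointwise bound on $S_{n,L}$ together with $\Re(s)>\frac12$ forces the boundary term at infinity to vanish. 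Inserting $|S_{n,L}(t)|\ll_{L,\varepsilon}t^{\frac12+\varepsilon}$ bounds the first term on the right by $O_{L,\varepsilon}(X^{\frac12-\Re(s)+\varepsilon})$ and the remaining integral by $O_{L,\varepsilon}\bigl(|s|X^{\frac12-\Re(s)+\varepsilon}/(\Re(s)-\frac12-\varepsilon)\bigr)$, and these combine into the advertised error $O_{L,\varepsilon}\bigl(|s|X^{\frac12-\Re(s)+\varepsilon}\bigr)$.

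Finally, since both sides of the resulting identity are meromorphic on $\Re(s)>\frac12$ with matching simple poles at $s=1$ (so their difference is holomorphic there), analytic continuation extends the identity from $\Re(s)>1$ down to the stated half-plane $\Re(s)>\frac12$. The only genuinely delicate step will be the initial upgrade of Corollary \ref{SnCOR} to an almost-sure bound whose implied constant depends on $L$ but is uniform in $t$ (and hence in $X$); the remainder of the argument is a routine partial-summation exercise.
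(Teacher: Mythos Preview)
Your approach is correct and is exactly the one the paper takes (its proof simply invokes Lemma~\ref{TAILLEMMA} and its proof, together with Remark~\ref{SCHMIDTREMARK} for $n=2$). One small slip: in upgrading Corollary~\ref{SnCOR} to an almost-sure bound you should take the \emph{union} (not intersection) over $C=1,2,3,\ldots$ of the sets appearing there, since these sets increase with $C$ and the measure of their union is $\geq\sup_C(1-C^{-1})=1$; a lattice in the union then satisfies $|S_{n,L}(t)|\ll_{L,\varepsilon}t^{1/2+\varepsilon}$ with an $L$-dependent constant absorbing the relevant $C$.
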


\begin{proof}
The lemma follows immediately from Lemma \ref{TAILLEMMA} and its proof (using the estimate in Remark \ref{SCHMIDTREMARK} in place of Corollary \ref{SnCOR} in the case $n=2$).
\end{proof}
 
\begin{proof}[Proof of Theorem \ref{SUBCONVEXITYTHEOREM}]
The theorem is a direct consequence of Lemma \ref{LEMMAAPPROXFUNC}, the functional equation \eqref{FUNCTIONALEQ} and the Phragmén-Lindel\"of principle.
\end{proof} 
 
In order to apply Gallagher's lemma, we also need estimates for short partial sums of $\mathcal{E}_n(L,s)-\zeta(s)$. We define
\begin{align}\label{DeltaDef} 
\Delta(x)&=\Delta_{\sigma,L,X}(x)=\Delta_1(x)-\Delta_2(x),  \\  \intertext{where} \notag  \Delta_{1}(x)&= \Delta_{\sigma,L,X;1}(x)=\sum_{\mathcal V_j(L) \leq \min( e^x,X)} \mathcal V_j(L)^{-\sigma}\\ \intertext{and}
\notag \Delta_2(x) &= \Delta_{\sigma,L,X;2}(x)= \sum_{1 \leq  j \leq  \min( e^x,X)} j^{-\sigma}.  
\end{align}
To be precise, we need the following result.
 
\begin{lem}\label{PARTIALSUMSESTIMATE}
Let $\ve>0$. Let further $n \geq 2$, $\sigma>\frac12$, $0\leq\delta\leq1$ and $\alpha\in\R$. Then, for almost all $L \in X_n$, we have
\begin{gather*}   
\int_{\alpha}^{\alpha+1} \left| \Delta_{\sigma,L,X}(x+\delta)-\Delta_{\sigma,L,X}(x) \right|^2\,dx \ll_{L,\sigma,\varepsilon}  \delta e^{\alpha(\frac32-2 \sigma+\varepsilon)}.
\end{gather*}
\end{lem}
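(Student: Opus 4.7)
The plan is to prove the lemma by combining two complementary upper bounds on $|\Delta(x+\delta)-\Delta(x)|^2$, each efficient in a different regime of $\delta$. First I would fix $L$ in the full-measure set on which Corollary~\ref{SnCOR} (or Remark~\ref{SCHMIDTREMARK} when $n=2$) gives $|S_{n,L}(t)|\ll_{L,\ve} t^{\frac12+\ve}$ for all $t\geq 5$, and reduce by trivial arguments to the range $\alpha\geq 0$ with $e^{\alpha+1+\delta}\leq X$ (for $\alpha\geq \log X$ the integrand vanishes; for $\alpha$ bounded or for the edge case $e^\alpha\leq X \leq e^{\alpha+1+\delta}$ only boundedly many terms of $\Delta$ are involved and can be absorbed into the implicit constant). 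On this range,
\begin{equation*}
\Delta(x+\delta)-\Delta(x)=\int_{e^x}^{e^{x+\delta}} t^{-\sigma}\, dS_{n,L}(t),
\end{equation*}
where $dS_{n,L}$ has $+1$ atoms at the $\mathcal V_j$ and $-1$ atoms at the positive integers.

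The first bound is pointwise: integration by parts produces three pieces, each of which I would estimate using $|S_{n,L}(t)|\ll t^{\frac12+\ve}$ together with $\sigma>\tfrac12$ (hence, for $\ve$ small, $\sigma-\tfrac12-\ve>0$, which ensures convergence of the tail-type integral $\int t^{-\sigma-\frac12+\ve}\,dt$). This yields $|\Delta(x+\delta)-\Delta(x)|\ll e^{x(\frac12-\sigma+\ve)}$ uniformly in $\delta\in[0,1]$, and hence
\begin{equation*}
\int_\alpha^{\alpha+1} |\Delta(x+\delta)-\Delta(x)|^2\, dx \ll e^{\alpha(1-2\sigma+2\ve)}.
\end{equation*}
This $\delta$-independent estimate will be effective when $\delta$ is not too small.

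The second bound exploits the sparsity of jumps for small $\delta$. Writing $\Delta(x+\delta)-\Delta(x)=\sum_{y_i\in(x,x+\delta]} j_i$ as a signed sum over jump points $y_i\in\{\log\mathcal V_j\}\cup\{\log k:k\in\Z^+,\,k\leq X\}$ with weights $|j_i|=e^{-\sigma y_i}$, Cauchy--Schwarz gives $|\sum j_i|^2\leq M(x,\delta)\sum_{y_i\in(x,x+\delta]}|j_i|^2$, where $M(x,\delta):=\#\{y_i\in(x,x+\delta]\}$. Integrating over $x\in[\alpha,\alpha+1]$, swapping the resulting double sum with the $x$-integral, and bounding the inner overlap length by $\max(0,\delta-|y_i-y_l|)\leq\delta$ reduces the problem to estimating $\#\{y_i:|y_i-y_l|\leq\delta\}$. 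This count, combining the lattice part (via $N(t)=t+O(t^{\frac12+\ve})$ from $|S_{n,L}|\ll t^{\frac12+\ve}$) and the trivial integer part, is $\ll \delta e^{y_l}+e^{y_l(\frac12+\ve)}$. Summing against $|j_l|^2=e^{-2\sigma y_l}$ via Abel summation with jump density $\approx 2e^{y_l}$ then produces
\begin{equation*}
\int_\alpha^{\alpha+1} |\Delta(x+\delta)-\Delta(x)|^2\, dx \ll \delta^2\, e^{\alpha(2-2\sigma)}+\delta\, e^{\alpha(\frac32-2\sigma+\ve)},
\end{equation*}
in which the second term, coming from the Schmidt-type error, matches the target exactly.

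The proof concludes with a dichotomy based on the threshold $e^{\alpha(-\frac12+\ve)}$. If $\delta\geq e^{\alpha(-\frac12+\ve)}$ the pointwise bound yields $e^{\alpha(1-2\sigma+2\ve)}\leq \delta\, e^{\alpha(\frac32-2\sigma+\ve)}$; if $\delta\leq e^{\alpha(-\frac12+\ve)}$ then the first term of the counting bound satisfies $\delta^2 e^{\alpha(2-2\sigma)}\leq \delta\cdot e^{\alpha(-\frac12+\ve)}\cdot e^{\alpha(2-2\sigma)}=\delta\, e^{\alpha(\frac32-2\sigma+\ve)}$, while the second already matches. Hence $\int_\alpha^{\alpha+1}|\Delta(x+\delta)-\Delta(x)|^2\, dx \ll \delta\, e^{\alpha(\frac32-2\sigma+\ve)}$ in both regimes. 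The hard part I anticipate is the counting bound, specifically the careful use of Schmidt's pointwise error bound to count jumps in the short logarithmic window $[y_l-\delta,y_l+\delta]$ and the subsequent Abel summation over the weights; this step is precisely what produces the exponent $\frac32-2\sigma+\ve$ in the statement.
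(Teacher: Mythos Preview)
Your argument is correct, but the paper takes a considerably shorter route. Rather than running a dichotomy in $\delta$ and a Cauchy--Schwarz/jump-counting argument, the paper simply factors
\[
\int_\alpha^{\alpha+1}|\Delta(x+\delta)-\Delta(x)|^2\,dx
\;\le\;
\max_{x\in[\alpha,\alpha+1]}|\Delta(x+\delta)-\Delta(x)|
\cdot
\int_\alpha^{\alpha+1}|\Delta(x+\delta)-\Delta(x)|\,dx,
\]
then uses your pointwise bound $|\Delta(x+\delta)-\Delta(x)|\ll e^{x(\frac12-\sigma+\ve)}$ for the first factor, and handles the $L^1$ factor by splitting $\Delta=\Delta_1-\Delta_2$, removing the absolute values via the monotonicity of each $\Delta_i$, and applying the telescoping identity
\[
\int_\alpha^{\alpha+1}\bigl(\Delta_i(x+\delta)-\Delta_i(x)\bigr)\,dx
=\int_\alpha^{\alpha+\delta}\bigl(\Delta_i(x+1)-\Delta_i(x)\bigr)\,dx
\ll \delta\, e^{\alpha(1-\sigma)}.
\]
Multiplying the two factors gives the stated bound directly, with the $\delta$ appearing automatically from the length of the interval $[\alpha,\alpha+\delta]$ rather than from a jump-count in a short window. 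This completely bypasses the Cauchy--Schwarz step, the Abel summation, and the threshold argument in $\delta$. Your approach has the virtue of being more robust---it does not rely on the monotonicity of $\Delta_1$ and $\Delta_2$ individually---but here that monotonicity is available and the paper's proof exploits it to arrive at the result in a few lines.
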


\begin{proof}
To begin, we have
\begin{multline*}
\int_{\alpha}^{\alpha+1}\left| \Delta(x+\delta)-\Delta(x)\right|^2\,dx\\ 
\leq\max_{x \in [\alpha,\alpha+1]}\left|\Delta(x+\delta)-\Delta(x)\right| \int_{\alpha}^{\alpha+1}\left|\Delta(x+\delta)-\Delta(x)\right|\,dx. 
\end{multline*}
By Corollary \ref{SnCOR} (or Remark \ref{SCHMIDTREMARK} in the case $n=2$) and integration by parts, we obtain
\begin{equation*}
\left|\Delta(x+\delta)-\Delta(x)\right|=\left|\int_{\min(e^x,X)}^{\min(e^{x+\delta},X)}t^{-\sigma}\,dS_{n,L}(t)\right|\ll_{L,\sigma,\varepsilon}e^{x(\frac12-\sigma+\varepsilon)}
\end{equation*}
for almost all $L \in X_n$. For the rest of this proof we restrict our attention to the set of lattices satisfying the above estimate, for which we have
\begin{equation}\label{FIRSTALPHAEST}
\int_{\alpha}^{\alpha+1}\left|\Delta(x+\delta)-\Delta(x)\right|^2\,dx\ll_{L,\sigma,\varepsilon}e^{\alpha(\frac12-\sigma+\varepsilon)}  \int_{\alpha}^{\alpha+1}\left|\Delta (x+\delta)-\Delta(x)\right|\,dx. 
\end{equation}
Using \eqref{DeltaDef} and the triangle inequality, we find that the latter integral is bounded by
\begin{equation}\label{DELTAIEST}
\sum_{i=1}^2 \int_{\alpha}^{\alpha+1}\left|\Delta_{i}(x+\delta)-\Delta_{i}( x)\right|\,dx. 
\end{equation}
Since $\Delta_{i}(t)$ is nondecreasing for $i=1,2$, the absolute values in \eqref{DELTAIEST} can be removed and the above sum equals
\begin{equation*} 
\sum_{i=1}^2 \left(\int_{\alpha}^{\alpha+1}\Delta_{i}(x+\delta)\,dx - \int_{\alpha}^{\alpha+1} \Delta_{i}(x)\,dx\right).
\end{equation*}
Changing variables $y=x+\delta$ in the first integral, we obtain 
\begin{equation*}
\sum_{i=1}^2 \left(\int_{\alpha+\delta}^{\alpha+1+\delta}\Delta_{i}(y)\,dy - \int_{\alpha}^{\alpha+1}\Delta_{i}(x)\,dx\right)
=\sum_{i=1}^2 \int_{\alpha}^{\alpha+\delta}\big(\Delta_{i}(x+1)-\Delta_{i}(x)\big)\,dx,
\end{equation*}
which by Corollary \ref{SnCOR} (or Remark \ref{SCHMIDTREMARK} in the case $n=2$) is $O_{L,\sigma}(\delta e^{\alpha(1-\sigma)})$. Using this estimate, together with \eqref{FIRSTALPHAEST}, we obtain the desired result.
\end{proof}
 
\begin{proof}[Proof of Theorem \ref{MEANSQUARE}] 
Let $\sigma>\frac34$ be given. By the triangle inequality, the mean square estimate for the Riemann zeta function and Lemma 5.3 (applied with $X=T^4$) we have, for almost all $L\in X_n$,
\begin{gather}\label{yyyy}
\int_{1}^T \left|E_n\left(L,\frac{n\left(\sigma+it\right)}{2}\right)\right|^2\,dt 
\ll_{L,\sigma}\int_1^T \Bigg| \sum_{\mathcal V_j(L)\leq T^4}\mathcal V_j(L)^{-s}-\sum_{1\leq j\leq T^4}j^{-s}\Bigg|^2\,dt+T.
\end{gather}
The integral in the right-hand side of \eqref{yyyy} can be estimated by an application of Lemma 5.2 with $\theta=\frac 1 2$, resulting in the upper bound
\begin{gather}\label{DELTAINTEGRAL}
O(T^2)\int_{-\infty}^{\infty}\left|\Delta\left(x+\frac{1}{2T}\right)-\Delta(x)\right|^2\,dx,
\end{gather}
where $\Delta(x)=\Delta_{\sigma,L,X}(x)$ is defined in \eqref{DeltaDef}. The integrand has compact support contained in $[A,B]$, where $A=\log(\min(\mathcal V_1(L),1))+O(T^{-1})$ and $B=4 \log T$.
By dividing the integral into parts of length $1$ and using Lemma \ref{PARTIALSUMSESTIMATE} to estimate each term, we find that the expression in \eqref{DELTAINTEGRAL} is bounded by 
\begin{equation*}
O_{L,\sigma,\ve}(T)\sum_{k=\lfloor A \rfloor}^{\lfloor B \rfloor}  e^{k(\frac32-2 \sigma+\varepsilon)},
\end{equation*}
where the geometric sum is bounded with respect to $T$ whenever we choose $0<\ve<2\sigma-\frac32$. This finishes the proof.
\end{proof}

Before we turn to the proof of Theorem \ref{UNIV3}, we introduce some more notation. We let $\widehat L$ denote a set containing one representative from each pair $\{\vecm,-\vecm\}$
of primitive vectors\footnote{A primitive vector (in a lattice $L$) is a non-zero lattice vector which is not a positive integral multiple of another lattice vector.} in $L$, and write $\{\widehat{\mathcal V}_j(L)\}_{j=1}^{\infty}$ for the subsequence of $\{{\mathcal V}_j(L)\}_{j=1}^{\infty}$ corresponding to vectors $\vecv\in\widehat L$. We also let 
\begin{equation*}
\widehat N_{n,L}(t):=\#\left\{j : \widehat{\mathcal{V}}_j(L)\leq t\right\} \qquad (t\geq0),
\end{equation*}
and note that 
\begin{equation}\label{N-Nhat-relation}
N_{n,L}(t)=\sum_{1\leq m\leq (t/{\mathcal V}_1(L))^{1/n}}\widehat N_{n,L}\Big(\frac t{m^n}\Big).
\end{equation}
We will need a bound on the error term in the primitive circle problem, generalized to an $n$-dimensional ball of volume $t$ and a generic lattice $L\in X_n$, of the same quality as the bounds in Theorem \ref{Rnthm} and Remark \ref{SCHMIDTREMARK}.

\begin{lem}
Let $\ve>0$ and $n\geq2$. Then, for almost all $L\in X_n$, we have
\begin{align}\label{primitivecount}
\widehat N_{n,L}(t)=\frac{t}{\zeta(n)}+O_{L,\ve}\big(t^{\frac12+\ve}\big).
\end{align}
\end{lem}

\begin{proof}
We begin by defining (for $r\geq0$)
\begin{equation*}
\mathcal N_{n,L}(r):=\frac12\#\left\{\vecm\in L\setminus\{\vec0\}: |\vecm|\leq\ell_1 r\right\} 
\end{equation*}
and 
\begin{equation*}
\widehat{\mathcal{N}}_{n,L}(r):=\#\left\{\vecm\in \widehat L: |\vecm|\leq\ell_1 r\right\},
\end{equation*}
where $\ell_1$ denotes the length of a shortest nonzero vector in $L$. We note that these counting functions are related to the ones in \eqref{N-Nhat-relation} by
\begin{equation*}
N_{n,L}(t)=\mathcal N_{n,L}\left(\left(\frac{t}{{\mathcal V}_1(L)}\right)^{\frac1n}\right)
\end{equation*}
and
\begin{equation*}
\widehat N_{n,L}(t)=\widehat{\mathcal{N}}_{n,L}\left(\left(\frac{t}{{\mathcal V}_1(L)}\right)^{\frac1n}\right).
\end{equation*}
Hence, writing $x=x(t,L)= (t/{\mathcal V}_1(L))^{1/n}$, we can reformulate \eqref{N-Nhat-relation} as 
\begin{equation*}
\mathcal N_{n,L}(x)=\sum_{1\leq m\leq x}\widehat{\mathcal{N}}_{n,L}\left(\frac xm\right).
\end{equation*}
Now, using the M\"obius inversion formula in the form given in \cite[Theorem 268]{HW} together with Theorem \ref{Rnthm} and Remark \ref{SCHMIDTREMARK}, we obtain
\begin{equation*}
\widehat{\mathcal{N}}_{n,L}(x)=\sum_{1\leq m\leq x}\mu(m)\mathcal N_{n,L}\left(\frac xm\right)=\sum_{1\leq m\leq x}\mu(m)\left(\frac{t}{m^n}+O_{L,\ve}\left(\frac{t}{m^n}\right)^{\frac12+\ve}\right)
\end{equation*}
for almost every $L\in X_n$. Extending the upper bound in the summation of the main terms to infinity and then estimating the error terms trivially, we arrive at the desired result.
\end{proof}

We further define
\begin{equation*}
\widehat{\mathcal E}_{n}(L,s):=\widehat{\mathcal V}_1(L)^{s}\sum_{\vecm\in \widehat L}\left(\tfrac12 V_n|\vecm|^{n}\right)^{-s}= 1+ \sum_{j=2}^\infty \left( \frac{\widehat{\mathcal V}_j(L)}{\widehat{\mathcal V}_1(L)} \right)^{-s} \qquad (\Re(s)>1) 
\end{equation*} 
and note that 
\begin{align}\label{PRIMITIVEEPSTEIN}
E_{n}\left(L,\tfrac{ns}{2}\right)=2\zeta(ns)\bigg(\frac{V_n}{2\widehat{\mathcal V}_1(L)}\bigg)^s\widehat{\mathcal E}_{n}(L,s).
\end{align}

\begin{proof}[Proof of Theorem \ref{UNIV3}] 
To begin, we note that since $\sigma_0=\min\{ \Re(s) : s \in K\}>\frac34>\frac12$, we can for any $\ve_1>0$ choose $N_0\in\Z^+$ so that for $n \geq 2$
\begin{align}\label{Truncatedzeta}
\max_{\Re(s) \geq \sigma_0} \left|\prod_{\substack {p < N_0}}  (1-p^{-ns})-\frac1{\zeta(ns)} \right| < \ve_1.
\end{align} 
Furthermore, it follows from \cite[Lemma 2]{SodStrom} that 
the set \{$\log \widehat{\mathcal V}_j(L)-\log \widehat{\mathcal V}_1(L): j \geq 2 \}$  
is linearly independent over $\Q$ for almost all lattices $L\in X_n$. From now on we restrict our attention to lattices $L\in X_n$ with this property. We let
$$\Gamma=\left\{\frac{\widehat{\mathcal V}_j(L)}{\widehat{\mathcal V}_1(L)} : j\geq2\right\} \cup\left\{p^n : p<N_0\right\} \cup \left \{\frac{V_n}{2\widehat{\mathcal V}_1(L)} \right \}$$ and choose $\gammamax\subseteq\Gamma$ to be maximal with respect to the property of having the logarithms of all its elements linearly independent over $\Q$. Since the differences $\log \widehat{\mathcal V}_j(L)-\log \widehat{\mathcal V}_1(L)$ for $j \geq 2$ are linearly independent over $\mathbb Q$ by assumption, we will always include these elements in $\gammamax$. We write $$\gammamax=\{\lambda_k\}_{k=1}^{\infty}.$$ Note that the logarithm of each element $\lambda$ in the finite set $\Gamma\setminus\{\widehat{\mathcal V}_j(L)/\widehat{\mathcal V}_1(L)\}_{j=2}^{\infty}$ can be written as
\begin{gather} \label{abb} 
\log \lambda = \sum_{k=1}^{N_2} q_{k,\lambda} \log{\lambda_k}= \sum_{k=1}^{N_2} \frac{h_{k,\lambda}}{N_1} \log{\lambda_k}
\end{gather}
for $\lambda_k \in \gammamax$, $q_{k,\lambda}\in\Q$, $h_{k,\lambda}\in\Z$ and $N_1,N_2\in\Z^+$ where $N_1$ can be chosen as the least common multiple of the denominators of the rational numbers $q_{k,\lambda}$ (for all $k$ and $\lambda$). Note in particular that $N_1$ and $N_2$ are chosen independently of $\lambda$. This is possible since $\Gamma\setminus\{\widehat{\mathcal V}_j(L)/\widehat{\mathcal V}_1(L)\}_{j=2}^{\infty}$ is finite and the integers $h_{k,\lambda}$ are allowed to be zero. We also find it convenient to introduce
$$N_3:=\max_\lambda \sum_{k=1}^{N_2} |h_{k,\lambda}|.$$ 

Next, we define the Dirichlet series
$$A(s):=\sum_{k=1}^{N_2}\lambda_k^{-s/N_1}+\sum_{k=N_2+1}^{\infty}\lambda_k^{-s} \qquad (\Re(s)>1)$$
and the Dirichlet polynomials
$$B(s):=\sum_{k=1}^{N_2} (\lambda_k^{-s/N_1}-\lambda_k^{-s})$$
and 
$$C(s):=\sum_{\lambda_k\in\gammamax\setminus\{\widehat{\mathcal V}_j(L)/\widehat{\mathcal V}_1(L)\}_{j=2}^{\infty}}\lambda_k^{-s}.$$
Note that these functions are related to $\widehat{\mathcal E}_{n}(L,s)$ via 
\begin{align}\label{EpsteinABCrelation}
\widehat{\mathcal E}_{n}(L,s)=1+A(s)-B(s)-C(s).
\end{align}
We are interested in applying \cite[Theorem 2.1]{DruGarKac} to the Dirichlet series $A(s)$. Hence we need to verify that $A(s)$ satisfies the four conditions in \cite[Theorem 2.1]{DruGarKac} for almost all $L\in X_n$. The first condition, the so-called packing condition, follows from \eqref{primitivecount} which implies that for almost every $L\in X_n$, we have
\begin{equation*}
\left|\widehat N_{n,L}\big(e^{x\pm\frac{c}{x^2}}\big)-\widehat N_{n,L}\left(e^{x}\right)\right|\gg\frac{e^x}{x^2}\gg e^{(1-\ve')x}
\end{equation*}
for any $c>0$ and any $\ve'>0$ (which in turn immediately implies the packing condition for $A(s)$). The second condition, the linear independence over $\mathbb Q$ of the logarithms $\log\lambda_k$ corresponding to elements $\lambda_k\in\gammamax$, follows from the defining property of $\gammamax$. The third condition follows immediately from the fact that $\widehat{\mathcal E}_n(L,s)$ is absolutely convergent in the half-plane $\Re(s)>1$. Finally, the fourth condition (on approximation) follows for almost all $L \in X_n$ from \eqref{PRIMITIVEEPSTEIN}, the mean square estimate in Theorem \ref{MEANSQUARE} and \cite[Proposition 2.2]{DruGarKac}. Thus, for almost all lattices $L\in X_n$, we may apply \cite[Theorem 2.1]{DruGarKac} to $A(s)$. For the rest of this proof we assume that we are working with such a lattice $L\in X_n$.

Let $$g(s):=\frac {f(s)} {2 \zeta(ns)} \bigg(\frac {V_n}{2\widehat{\mathcal V}_1(L)}\bigg)^{-s}+B(s)+C(s)-1,$$
where $f(s)$ is the function given in the statement of the theorem, and let 
$$\beta:=\sup_{\Re(s)\in[\frac34,1]}\left|2\zeta(ns)\bigg(\frac{V_n}{2\widehat{\mathcal V}_1(L)}\bigg)^{s}\right|=\max_{x\in[\frac34,1]}\left|2\zeta(nx)\bigg(\frac{V_n}{2\widehat{\mathcal V}_1(L)}\bigg)^{x}\right|.$$ 
Then we can apply \cite[Theorem 2.1]{DruGarKac}, with suitable $\delta$ and $\mu$, to $A(s)$ and $g(s)$ to obtain, for a positive proportion of $t\in[0,T]$ as $T\to\infty$,
\begin{align}\label{uio} 
\max_{s \in K}  \left|A(s+it)-g(s)\right|<\frac{\ve}{4\beta}
\end{align}
and
\begin{align} \label{ineq} 
\left \| \frac{ t\log \lambda_k}{2 \pi N_1 } \right \|< \frac{\delta}{N_3}
\end{align}
for $1\leq k\leq N_2$, where $\|\cdot\|$ denotes the distance to the closest integer. Note that it follows from \eqref{abb}, \eqref{ineq} and the triangle inequality that
$$\left \| \frac{ t\log (p^n)  }{2 \pi } \right \|<\delta$$
for all $p<N_0$ and 
$$\left \| \frac{t\log (V_n/(2\widehat{\mathcal V}_1(L)))}{2 \pi} \right \| <\delta.$$
Hence, using \eqref{Truncatedzeta} and the absolute convergence of $\zeta(s)$ in $\Re(s)>1$, we obtain, for all sufficiently small choices of $\ve_1$ and $\delta$,
\begin{align}\label{INVERTEDZETAAPPROXIMATION}
\max_{s \in K} \left|\frac{(V_n/(2\widehat{\mathcal V}_1(L)))^{-s}}{2\zeta(ns)}-\frac{(V_n/(2\widehat{\mathcal V}_1(L)))^{-s-it}}{2\zeta(n(s+it))}\right|<\frac{\ve}{4\beta(\max_{s\in K}|f(s)|+1)}.
\end{align}
Similarly, we use \eqref{ineq} with a possibly smaller constant $\delta$ to conclude that
\begin{align}\label{BandCestimate}
\max_{s \in K} \left|B(s)-B(s+it)\right| <\frac{\ve}{4\beta}  \qquad \text{and}  \qquad \max_{s \in K} \left|C(s)-C(s+it)\right|<\frac{\ve}{4\beta}.
\end{align}
We stress that \eqref{INVERTEDZETAAPPROXIMATION} and \eqref{BandCestimate} holds for the same values of $t$ as \eqref{uio} and \eqref{ineq}. Throughout, we restrict our attention to such values of $t$.

In order to finish the proof it remains to put the above pieces together. Using \eqref{EpsteinABCrelation}, \eqref{uio} and \eqref{BandCestimate}, together with the triangle inequality, we obtain
\begin{multline*}
\max_{s \in K} \left|\widehat{\mathcal E}_{n}(L,s+it)-\frac {f(s)} {2 \zeta(ns)} \bigg(\frac {V_n}{2\widehat{\mathcal V}_1(L)}\bigg)^{-s}\right| \leq \max_{s \in K} \left|A(s+it)-g(s)\right|\\
+\max_{s \in K}\left|B(s)-B(s+it)\right|+\max_{s \in K}\left|C(s)-C(s+it)\right|<\frac{3\ve}{4\beta}.
\end{multline*}
Next, it follows from the above and \eqref{INVERTEDZETAAPPROXIMATION} that
\begin{align*}
&\max_{s \in K} \left|\widehat{\mathcal E}_{n}(L,s+it)-\frac {f(s)} {2 \zeta(n(s+it))} \bigg(\frac {V_n}{2\widehat{\mathcal V}_1(L)}\bigg)^{-s-it}\right|\\
&\hspace{40pt}\leq \max_{s \in K} \left|\widehat{\mathcal E}_{n}(L,s+it)-\frac {f(s)} {2 \zeta(ns)} \bigg(\frac {V_n}{2\widehat{\mathcal V}_1(L)}\bigg)^{-s}\right|\\
&\hspace{80pt}+\max_{s\in K}|f(s)|\max_{s \in K} \left|\frac{(V_n/(2\widehat{\mathcal V}_1(L)))^{-s}}{2\zeta(ns)}-\frac{(V_n/(2\widehat{\mathcal V}_1(L)))^{-s-it}}{2\zeta(n(s+it))}\right|<\frac{\ve}{\beta}.
\end{align*}
Finally, we use \eqref{PRIMITIVEEPSTEIN} to arrive at
\begin{multline*}
\max_{s \in K} \left|E_n\left(L,\frac {n(s+it)}2\right)-f(s)\right|\\
\leq\beta\max_{s \in K} \left|\widehat{\mathcal E}_{n}(L,s+it)-\frac {f(s)} {2 \zeta(n(s+it))} \bigg(\frac {V_n}{2\widehat{\mathcal V}_1(L)}\bigg)^{-s-it}\right|<\ve,
\end{multline*}
which is the desired conclusion.
\end{proof}

\end{document}